\documentclass[10pt,leqno,letterpaper]{article}
\usepackage[utf8]{inputenc}
\usepackage{amsmath}
\usepackage{amsfonts}
\usepackage{amssymb}
\usepackage{graphicx}
\usepackage{mathrsfs}
\usepackage{upref,amsthm,amsxtra,exscale}
\usepackage{cite}
\usepackage[colorlinks=true,urlcolor=blue,
citecolor=red,linkcolor=blue,linktocpage,pdfpagelabels,
bookmarksnumbered,bookmarksopen]{hyperref}
\usepackage{fullpage}
\usepackage{upgreek}

\newtheorem{theorem}{Theorem}[section]

\newtheorem{remark}[theorem]{Remark}
\newtheorem{lemma}[theorem]{Lemma}
\newtheorem{proposition}[theorem]{Proposition}
\newtheorem{definition}[theorem]{Definition}
\newtheorem{examples}[theorem]{Examples}

\numberwithin{equation}{section}

\def\r{\mathbb{R}}
\def\rn{\mathbb{R}^N}
\def\z{\mathbb{Z}}

\def\n{\mathbb{N}}
\def\cc{\mathbb{C}}
\def\eps{\varepsilon}
\def\rh{\rightharpoonup}

\def\irn{\int_{\r^N}}
\def\vp{\varphi}
\def\o{\Omega}
\def\t{\Theta}
\def\bf{\mathbf}
\def\tilde{\widetilde}

\def\cC{\mathcal{C}}

\def\cH{\mathcal{H}}

\def\cJ{\mathcal{J}}

\def\cN{\mathcal{N}}

\def\cU{\mathcal{U}}

\def\supp{\mathrm{supp}}
\def\F{\mathrm{Fix}}


\author{Mónica Clapp\footnote{M. Clapp was supported by CONACYT (Mexico) through the grant for the research project A1-S-10457.} \ and \ Mayra Soares\footnote{M. Soares was supported by UNAM-DGAPA (Mexico) through a postdoctoral fellowship.}}
\title{Coupled and uncoupled sign-changing spikes of singularly perturbed elliptic systems}
\date{\today}

\begin{document}
	\maketitle
	
	\begin{abstract}
We study the existence and asymptotic behavior of solutions having positive and sign-changing components to the singularly perturbed system of elliptic equations
\begin{equation*}
		\begin{cases}
			-\eps^2\Delta u_i+u_i=\mu_i|u_i|^{p-2}u_i + \sum\limits_{\substack{j=1 \\ j \not=i}}^\ell\lambda_{ij}\beta_{ij}|u_j|^{\alpha_{ij}}|u_i|^{\beta_{ij} -2}u_i,\\
			u_i \in H^1_0(\o), \quad u_i\neq 0, \qquad i=1,\ldots,\ell,
		\end{cases} 
	\end{equation*}
in a bounded domain $\o$ in $\rn$, with $N\geq 4$, $\eps>0$, $\mu_i>0$, $\lambda_{ij}=\lambda_{ji}<0$, $\alpha_{ij}, \beta_{ij}>1$, $\alpha_{ij}=\beta_{ji}$, $\alpha_{ij} + \beta_{ij} = p\in (2,2^*)$, and $2^{*}:=\frac{2N}{N-2}$. 

If $\o$ is the unit ball we obtain solutions with a prescribed combination of positive and nonradial sign-changing components exhibiting two different types of asymptotic behavior as $\eps\to 0$: solutions whose limit profile is a rescaling of a solution with positive and nonradial sign-changing components of the limit system 
\begin{equation*}
		\begin{cases}
			-\Delta u_i+u_i=\mu_i|u_i|^{p-2}u_i + \sum\limits_{\substack{j=1 \\ j \not=i}}^\ell\lambda_{ij}\beta_{ij}|u_j|^{\alpha_{ij}}|u_i|^{\beta_{ij} -2}u_i,\\
			u_i \in H^1(\rn), \quad u_i\neq 0, \qquad i=1,\ldots,\ell,
		\end{cases}
	\end{equation*}
and solutions whose limit profile is a solution of the uncoupled system, i.e., after rescaling and translation, the limit profile of the $i$-th component is a positive or a nonradial sign-changing solution to the equation
$$-\Delta u+u=\mu_i|u|^{p-2}u,\qquad u \in H^1(\rn), \qquad u\neq 0.$$

		\textsc{Keywords:} Nonlinear elliptic system, singularly perturbed, weakly coupled, competitive, positive and sign-changing spikes.
		
		\textsc{MSC2020:} 35J57 (35B06, 35B25, 35B40, 47J30).
	\end{abstract}
	
	\section{Introduction}
	\label{sec:introduction}
	
	We consider the following system of singularly perturbed elliptic equations
	\begin{equation} \label{P_e}
		\tag{$\mathscr S_{\eps,\o}$}\qquad
		\begin{cases}
			-\eps^2\Delta u_i+u_i=\mu_i|u_i|^{p-2}u_i + \sum\limits_{\substack{j=1 \\ j \not=i}}^\ell\lambda_{ij}\beta_{ij}|u_j|^{\alpha_{ij}}|u_i|^{\beta_{ij} -2}u_i,\\
			u_i \in H^1_0(\o), \quad u_i\neq 0, \qquad i=1,\ldots,\ell,
		\end{cases} 
	\end{equation}
	where $\eps>0$ is a small parameter, $\Omega$ is a bounded smooth domain in $\mathbb{R}^N$ which contains the origin, $N\geq 2$, $\mu_i>0$, $\lambda_{ij}=\lambda_{ji}<0$, $\alpha_{ij}, \beta_{ij}>1$, $\alpha_{ij}=\beta_{ji}$, $\alpha_{ij} + \beta_{ij} = p\in (2,2^*)$, and $2^{*}$ is the critical Sobolev exponent (i.e., $2^{*}:=\frac{2N}{N-2}$ if $N\geq 3$ and $2^{*}:=\infty$ if $N=2$).
	
	This system arises as a model for various physical phenomena, in particular in the study of standing waves for a mixture of Bose–Einstein condensates of $\ell$ different hyperfine states which overlap in space, see for example \cite{BBEG}. Here we consider the case in which the interaction between particles in the same state is attractive ($\mu_i>0$) and the interaction between particles in any two different states is repulsive ($\lambda_{ij}<0$). 
	
	In their seminal paper \cite{LW} Lin and Wei described the behavior of positive least energy solutions for the system \eqref{P_e} with cubic nonlinearity ($\alpha_{ij}=\beta_{ij}=2$) as $\eps\to 0$. They showed that each component is a spike, i.e., a rescaling of the positive ground state solution to the problem
	\begin{equation}\label{P_i}
		\tag{$\mathscr P_i$}
		\begin{cases}
			-\Delta u+u=\mu_i|u|^{p-2}u,\\
			u \in H^1(\rn), \quad u\neq 0,
		\end{cases}
	\end{equation}
	and that, as $\eps\to 0$, the centers of the spikes approach a sphere-packing position in $\o$, i.e., a configuration of points maximizing the distances among them and to the boundary of $\o$. Multiple positive solutions and a numerical description of them is given in \cite{WWL}.
	
	Our main objective is to study the existence and profile of solutions to \eqref{P_e} some of whose components can be positive while others change sign. It is reasonable to expect that there will be solutions with sign-changing spikes, i.e., solutions whose sign-changing components look like rescalings of a sign-changing solution to the problem \eqref{P_i}. The existence of nonradial sign-changing solutions to \eqref{P_i} was first established by Bartsch and Willem in \cite{BW} for dimensions $N=4$ and $N\geq6$, and by Lorca and Ubilla for $N=5$ in \cite{LU}, taking advantage of some symmetry properties of $\rn$. Other solutions of this type were found in \cite{CSr}.
	
	On the other hand, rescaling the components by $\widetilde u_i(x):=u_i(\eps x)$ the system \eqref{P_e} becomes
	\begin{equation*}
		\begin{cases}
			-\Delta u_i+u_i=\mu_i|u_i|^{p-2}u_i + \sum\limits_{\substack{j=1 \\ j \not=i}}^\ell\lambda_{ij}\beta_{ij}|u_j|^{\alpha_{ij}}|u_i|^{\beta_{ij} -2}u_i,\\
			u_i \in H^1_0(\o_\eps), \quad u_i\neq 0, \qquad i=1,\ldots,\ell,
		\end{cases} 
	\end{equation*}
	in $\o_\eps:=\{x\in\rn:\eps x\in\o\}$. As $\eps\to 0$ these domains cover the whole space $\rn$. So it is natural to ask whether the system \eqref{P_e} has a solution that, after rescaling, approaches a solution to the system
	\begin{equation}\label{Sl_i}
		\tag{$\mathscr S_{\infty,\ell}$}\quad
		\begin{cases}
			-\Delta u_i+u_i=\mu_i|u_i|^{p-2}u_i + \sum\limits_{\substack{j=1 \\ j \not=i}}^\ell\lambda_{ij}\beta_{ij}|u_j|^{\alpha_{ij}}|u_i|^{\beta_{ij} -2}u_i,\\
			u_i \in H^1(\rn), \quad u_i\neq 0, \qquad i=1,\ldots,\ell.
		\end{cases}
	\end{equation}
	As shown by Lin and Wei in \cite[Theorem 1]{LW2}, this system does not have a ground state solution. On the other hand, Sirakov showed in \cite{s} that it does have a positive least energy radial solution (i.e., every component $u_i$ is positive and radial); see also \cite{BjLyMsh,WjWy} and the references therein.
	
	One might expect to obtain solutions with positive and sign-changing components for the system \eqref{P_e} whose limit profile is a solution of the same type for the system \eqref{Sl_i}. For this last system with cubic nonlinearity (hence, $N\leq 3$) Sato and Wang \cite{sw2} established the existence of least energy solutions of this type whose components are radial; see also \cite{clz,LLW}. The following result provides solutions with nonradial sign-changing components. It is proved in Section \ref{sec:limit_system}.
	
	\begin{theorem} \label{thm:main_rn}
		Let $N=4$ or $N\geq 6$. Then, for any given $0\leq m\leq\ell$, the system \eqref{Sl_i} has a solution $\bf w=(w_1,\ldots,w_\ell)$ whose first $m$ components $w_1,\ldots,w_m$ are positive and whose last $\ell-m$ components $w_{m+1},\ldots,w_\ell$ are nonradial and change sign. Furthermore, $\bf w$ satisfies
		\begin{equation} \label{eq:symmetries}
			\begin{cases}
				w_i(z_1,z_2,x)=w_i(\mathrm{e}^{\mathrm{i}\vartheta} z_1,\mathrm{e}^{\mathrm{i}\vartheta} z_2,gx) &\text{for all \ }\vartheta\in[0,2\pi), \ g\in O(N-4), \ i=1,\ldots,\ell, \\
				w_i(z_1,z_2,x)=w_i(z_2,z_1,x) &\text{if \ }i=1,\ldots,m, \\
				w_i(z_1,z_2,x)=-w_i(z_2,z_1,x) &\text{if \ }i=m+1,\ldots,\ell,
			\end{cases}
		\end{equation}
		for all $(z_1,z_2,x)\in\cc\times\cc\times\r^{N-4}\equiv\rn$, and it has least energy among all nontrivial solutions with these symmetry properties.
	\end{theorem}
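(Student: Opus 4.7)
My strategy is to recast the existence problem as a constrained minimization in a subspace of $[H^1(\rn)]^\ell$ built to encode the symmetries \eqref{eq:symmetries}, using the symmetric structure both to restore the compactness destroyed by translation invariance and to read off the sign pattern of the minimizer. Let $G:=U(1)\times O(N-4)$ act on $\rn\equiv\cc\times\cc\times\r^{N-4}$ by $(\vartheta,g)\cdot(z_1,z_2,x):=(\mathrm{e}^{\mathrm{i}\vartheta}z_1,\mathrm{e}^{\mathrm{i}\vartheta}z_2,gx)$, and let $\tau(z_1,z_2,x):=(z_2,z_1,x)$. Denote by $H\subset[H^1(\rn)]^\ell$ the space of tuples $\bf u=(u_1,\ldots,u_\ell)$ whose components are all $G$-invariant, with $u_i$ additionally $\tau$-invariant for $i\leq m$ and $\tau$-antiinvariant for $i\geq m+1$. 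The energy functional
$$\cJ_\infty(\bf u):=\frac{1}{2}\sum_{i=1}^\ell\|u_i\|_{H^1}^2-\frac{1}{p}\sum_{i=1}^\ell\mu_i\irn|u_i|^p-\frac{1}{p}\sum_{\substack{i,j=1\\ i\neq j}}^\ell\lambda_{ij}\beta_{ij}\irn|u_j|^{\alpha_{ij}}|u_i|^{\beta_{ij}}$$
is $G$-invariant and $\tau$-equivariant on the respective factors, so Palais' principle of symmetric criticality identifies the critical points of $\cJ_\infty|_H$ with solutions of \eqref{Sl_i} that lie in $H$.

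Because $N=4$ or $N\geq 6$, every point $y\in\rn\setminus\{0\}$ has a $G$-orbit of positive dimension: the $U(1)$-factor provides it whenever $(z_1,z_2)\neq 0$, while the $O(N-4)$-factor provides it when $(z_1,z_2)=0$ and $x\neq 0$ (here one needs $N-4\geq 2$, which is why $N=5$ is excluded). A standard Lions-type argument, in the spirit of \cite{BW}, then yields a compact embedding $H\hookrightarrow[L^q(\rn)]^\ell$ for every $q\in(2,2^*)$. I would then minimize $\cJ_\infty$ on the componentwise Nehari set
$$\cN_H:=\bigl\{\bf u\in H: u_i\neq 0\ \text{and}\ \|u_i\|_{H^1}^2=\mu_i\irn|u_i|^p+\sum_{j\neq i}\lambda_{ij}\beta_{ij}\irn|u_j|^{\alpha_{ij}}|u_i|^{\beta_{ij}}\ \forall\,i\bigr\}.$$
Since $\lambda_{ij}<0$, each Nehari identity gives $\|u_i\|_{H^1}^2\leq\mu_i\int|u_i|^p$, and the Sobolev inequality bounds $\|u_i\|_{H^1}$ uniformly away from zero on $\cN_H$. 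On $\cN_H$ the functional reduces to $\cJ_\infty(\bf u)=\frac{p-2}{2p}\sum_i\|u_i\|_{H^1}^2$, hence $c_H:=\inf_{\cN_H}\cJ_\infty>0$ and every minimizing sequence is bounded in $H$. The compactness of the previous step upgrades its weak limit to a strong $L^p$-limit, which suffices to pass to the limit in the Nehari identities and produces a minimizer $\bf w\in\cN_H$; a Lagrange-multiplier argument, exploiting the nearly diagonal structure of the derivatives of the constraints at $\bf w$, then gives $\cJ_\infty'(\bf w)=0$ on $H$, so $\bf w$ solves \eqref{Sl_i}.

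To conclude, for $i\leq m$ the substitution $w_i\mapsto|w_i|$ preserves $G$- and $\tau$-invariance, membership in $\cN_H$ and the value of $\cJ_\infty$, so we may assume $w_i\geq 0$ and invoke the strong maximum principle in the $i$-th equation of \eqref{Sl_i} to obtain $w_i>0$. For $i\geq m+1$, the antisymmetry $w_i(z_1,z_2,x)=-w_i(z_2,z_1,x)$ together with $|\tau y|=|y|$ forces $w_i\not\equiv 0$ to be simultaneously nonradial and sign-changing. The main technical difficulty I anticipate is not the compactness itself, which is classical under the orbit-dimension hypothesis, but the attainment step on $\cN_H$: with $\ell$ competitive scalar constraints, one has to rule out the degeneracy in which some component of the weak limit vanishes and to verify nondegeneracy of the Lagrange-multiplier system. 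Both issues are controlled by the uniform lower bound on $\|u_i\|_{H^1}$ along the minimizing sequence together with the sign $\lambda_{ij}<0$, which makes every coupling contribution in $\cJ_\infty|_{\cN_H}$ nonnegative and so prevents cancellations in the limit.
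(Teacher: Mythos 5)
Your argument is correct in outline, but it reaches Theorem \ref{thm:main_rn} by a genuinely different route than the paper. You minimize directly on the Nehari set inside the symmetric space $H$, using the compact embedding $H^1(\rn)^\phi\hookrightarrow L^p(\rn)$ that follows from every nonzero $G$-orbit having positive dimension when $N=4$ or $N\geq 6$. The paper has exactly this compactness statement (Lemma \ref{lem:compactness}), but uses it only for the multiplicity result (Theorem \ref{thm:multiplicity}); Theorem \ref{thm:main_rn} is instead derived from the splitting Theorem \ref{thm:spliting}, a concentration-compactness analysis in which concentration points of minimizing sequences are relocated onto $G$-orbits via Lemma \ref{lem:orbits} and forced to lie in $\F(G)=\{0\}$. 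Your route is more economical for the bare existence statement, and it correctly explains the exclusion of $N=5$; what it does not give is the finer information the paper extracts from Theorem \ref{thm:spliting} — the dichotomy governed by $\F(G)$ (Theorem \ref{thm:existence_rn}: a least energy solution exists if and only if $\F(G)=\{0\}$), the strict inequality $c_\infty^\ell>\sum_i c_i$ via Proposition \ref{prop:energy_estimates}$(iii)$ and the unique continuation principle, and the asymptotic profiles needed for Theorems \ref{thm:main1} and \ref{thm:main2}. Two points in your sketch deserve to be made precise. First, strong $L^p$-convergence of the minimizing sequence does not by itself place the weak limit $\bf u$ on $\cN_H$ (the $H^1$-norm can drop); one must check that $\mu_i\int_{\rn}|u_i|^p+\sum_{j\neq i}\lambda_{ij}\beta_{ij}\int_{\rn}|u_j|^{\alpha_{ij}}|u_i|^{\beta_{ij}}\geq c_0>0$ in the limit, project $\bf u$ back onto $\cN_H$ via the map $\bf t_{\bf u}$ of Lemma \ref{N}$(ii)$, and use weak lower semicontinuity to conclude $\bf t_{\bf u}=(1,\ldots,1)$ and strong convergence; this is the argument of Theorem \ref{thm:minimizers}. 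Second, your Lagrange-multiplier step does go through here: on $\cN_H$ the matrix $\bigl(\partial_j g_i(\bf u)u_j\bigr)$ of the constraints $g_i(\bf u):=\partial_i\cJ^\ell_\infty(\bf u)u_i$ has nonnegative off-diagonal entries (since $\lambda_{ij}<0$) and row sums equal to $(2-p)\|u_i\|^2<0$, hence is strictly diagonally dominant and invertible, so all multipliers vanish; the paper sidesteps this verification by invoking the reduction of \cite{CSz} (Theorem \ref{ES}). With these two steps spelled out, your proof is complete.
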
	
	
	The symmetries of the sign-changing components $w_{m+1},\ldots,w_\ell$ are basically the same as those considered in \cite{BW}. So one may wonder whether one can obtain a similar result for $N=5$ using the symmetries introduced in \cite{LU} or \cite{CSr}. As we shall see, this is not possible. A key role is played by the space of fixed points of the group of symmetries involved, see Theorem \ref{thm:existence_rn}.
	
	For the singularly perturbed system \eqref{P_e} in a domain $\o$ having suitable symmetries we obtain solutions with a prescribed combination of positive and sign-changing components exhibiting two different types of asymptotic behavior as $\eps\to 0$. We get solutions whose limit profile is a rescaling of a solution to the limit system \eqref{Sl_i} with positive and sign-changing components, and solutions whose limit profile is a solution to the uncoupled system, i.e., the limit profile of the $i$-th component is a rescaling of a positive or a sign-changing solution to the problem \eqref{P_i}. To illustrate our results, let us focus on the case where $\o$ is the open unit ball $B_1(0)$ in $\rn$ centered at the origin. For $\eps>0$ and $u\in H^1(\rn)$ let
	\[\|u\|_\eps^2:=\frac{1}{\eps^N}\displaystyle\irn\Big[\eps^2|\nabla u|^2+u^2\Big]\qquad\text{and}\qquad\|u\|:=\|u\|_1.\]
	The following two theorems are special cases of Theorem \ref{thm:profiles}, which is stated and proved in Section \ref{sec:profiles}.
	
	\begin{theorem}\label{thm:main1}
		Let $N=4$ or $N\geq 6$, and $\o=B_1(0)$. Then, for any given $0\leq m\leq\ell$ and any sequence $(\eps_k)$ of positive numbers converging to zero, there exists a solution $\widehat{\bf u}_k=(\widehat u_{1k},\ldots,\widehat u_{\ell k})$ to the system $(\mathscr S_{\eps_k,\o})$ whose first $m$ components are positive and whose last $\ell-m$ components are nonradial and change sign, with the following limit profile: 
		
		There exists a fully nontrivial solution ${\bf w=(w_1,\ldots,w_\ell)}$ to the system \eqref{Sl_i} such that, after passing to a subsequence,
		\[\lim_{k \to \infty}\|\widehat u_{ik}-w_i(\eps_k^{-1} \ \cdot \ )\|_{\eps_k}=0\qquad\text{for all \ }i=1,\ldots,\ell.\]
		The first $m$ components of $\bf w$ are positive, its last $\ell-m$ components are nonradial and change sign, and $\bf w$ satisfies \eqref{eq:symmetries}. Therefore,
		\[\lim_{k\to\infty}\displaystyle\sum_{i=1}^\ell\|\widehat u_{ik}\|_{\eps_k}^2=\displaystyle\sum_{i=1}^\ell\|w_i\|^2=:\widehat {\mathfrak{c}}_m.\]
	\end{theorem}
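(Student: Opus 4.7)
The plan is to perform the rescaling $y=x/\eps$, which sends \eqref{P_e} to the equivalent system on the expanding domain $\o_\eps:=\eps^{-1}B_1(0)$ and identifies $\|\widehat u_{ik}\|_{\eps_k}$ with the ordinary $H^1(\rn)$-norm of $v_{ik}(y):=\widehat u_{ik}(\eps_k y)$, extended by zero. Since $B_1(0)$ and the couplings are invariant under the group $\Gamma$ generated by $(z_1,z_2,x)\mapsto(\mathrm{e}^{\mathrm{i}\vartheta}z_1,\mathrm{e}^{\mathrm{i}\vartheta}z_2,gx)$ and the swap $\tau(z_1,z_2,x):=(z_2,z_1,x)$, I encode \eqref{eq:symmetries} by restricting the rescaled energy
\[
\cJ_\eps(\bf v):=\sum_{i=1}^\ell\tfrac12\|v_i\|^2-\sum_{i=1}^\ell\tfrac{\mu_i}{p}\int_{\o_\eps}|v_i|^p-\sum_{i<j}\lambda_{ij}\int_{\o_\eps}|v_j|^{\alpha_{ij}}|v_i|^{\beta_{ij}}
\]
to the closed subspace $\cH_\eps\subset H^1_0(\o_\eps)^\ell$ whose $i$-th component is invariant under the continuous part of $\Gamma$ and satisfies $v_i\circ\tau=\sigma_iv_i$ with $\sigma_i=+1$ for $i\le m$ and $\sigma_i=-1$ for $i>m$. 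By the principle of symmetric criticality, critical points of $\cJ_\eps|_{\cH_\eps}$ solve the rescaled system. Within $\cH_\eps$ I minimize $\cJ_\eps$ over the Nehari-type set
\[
\cN_\eps:=\bigl\{\bf v\in\cH_\eps : v_i\ne 0,\; \partial_{v_i}\cJ_\eps(\bf v)v_i=0,\ i=1,\dots,\ell\bigr\},
\]
with the first $m$ components required to be nonnegative. The competitive coupling $\lambda_{ij}<0$ ensures that every tuple in $\cH_\eps$ with no zero component has a unique positive rescaling lying in $\cN_\eps$ and maximizing $\cJ_\eps$ along the half-rays, so $\cN_\eps$ is a well-defined minimax class bounded below and away from $\bf 0$ componentwise; the $\Gamma$-compact embedding $\cH_\eps\hookrightarrow L^p(\o_\eps)^\ell$ then produces a minimizer $\bf v_k$ of $\cJ_{\eps_k}|_{\cN_{\eps_k}}$, and $\widehat{\bf u}_k(x):=\bf v_k(\eps_k^{-1}x)$ is the desired solution of $(\mathscr S_{\eps_k,\o})$.

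To identify the limit profile, let $c_\eps:=\inf_{\cN_\eps}\cJ_\eps$ and let $c_\infty$ be the analogue on $\rn$. Theorem \ref{thm:main_rn} gives a $\Gamma$-symmetric fully nontrivial minimizer $\bf w\in\cN_\infty$ of $\cJ_\infty$, and the Nehari identity yields $\widehat{\mathfrak c}_m=\sum_i\|w_i\|^2=\tfrac{2p}{p-2}c_\infty$. The upper bound $\limsup_kc_{\eps_k}\le c_\infty$ follows by testing against $(\chi_{\eps_k}w_1,\dots,\chi_{\eps_k}w_\ell)$, where $\chi_{\eps_k}$ is a smooth $\Gamma$-invariant cutoff supported in $\o_{\eps_k}$ and equal to $1$ on $B_{R_{\eps_k}}(0)$ with $R_{\eps_k}\to\infty$; the exponential decay of $\bf w$ (standard for solutions of \eqref{Sl_i}) controls the truncation error, and the unique $\cN_{\eps_k}$-projection stays uniformly close to the identity. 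For the matching lower bound, extend $\bf v_k$ by zero to $\rn$, extract a weak $H^1(\rn)^\ell$-limit $\bf w^\star$ along a subsequence, and upgrade to strong $L^p$-convergence via the $\Gamma$-compact embedding into $L^p(\rn)^\ell$; passing to the limit in the equation and in the Nehari identities shows $\bf w^\star\in\cN_\infty$ and satisfies \eqref{eq:symmetries} with the prescribed sign pattern, whence $\cJ_\infty(\bf w^\star)\ge c_\infty$. Combining both bounds forces $c_{\eps_k}\to c_\infty$ and $\|v_{ik}\|\to\|w^\star_i\|$; together with weak convergence this gives strong $H^1(\rn)$-convergence, and unscaling yields $\|\widehat u_{ik}-w^\star_i(\eps_k^{-1}\,\cdot\,)\|_{\eps_k}\to 0$.

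The main obstacle is securing full nontriviality of $\bf w^\star$, i.e., preventing any component from vanishing in $L^p$ in the limit. Membership in $\cN_{\eps_k}$ together with $\lambda_{ij}<0$ implies $\|v_{ik}\|^2\le\mu_i\int|v_{ik}|^p$ for every $i$ and $k$, so the Sobolev inequality delivers a uniform positive lower bound $\|v_{ik}\|_{L^p}\ge\delta>0$, and the $\Gamma$-compact embedding prevents this mass from escaping to infinity; hence $w^\star_i\not\equiv 0$ for every $i$. The prescribed positivity of the first $m$ components follows by passing to $|v_{ik}|$ (admissible under $\sigma_i=+1$) and the strong maximum principle on the limit, while the nonradial sign-changing character of the last $\ell-m$ components is automatic from their $\tau$-antisymmetry combined with nontriviality.
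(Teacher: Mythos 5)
Your argument is correct and reaches the same conclusion, but it routes the crucial compactness step differently from the paper. The paper derives Theorem \ref{thm:main1} as a special case of Theorem \ref{thm:profiles}, whose case $(I)$ rests on the splitting Theorem \ref{thm:spliting}: a concentration--compactness analysis via Lions' lemma together with the orbit dichotomy of Lemma \ref{lem:orbits}, which shows that concentration points must lie at bounded distance from $\F(G)$ (otherwise the $G$-orbit of a concentration point would spread a fixed amount of $L^p$-mass over arbitrarily many disjoint balls), so that $\F(G)=\{0\}$ forces all mass to stay near the origin. You instead invoke the compact embedding $H^1(\rn)^{\phi}\hookrightarrow L^p(\rn)$ directly on the zero-extended rescaled minimizers; this is exactly the paper's Lemma \ref{lem:compactness}, which the paper proves but deploys only for the multiplicity result, Theorem \ref{thm:multiplicity}. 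For the case $\F(G)=\{0\}$ your route is more elementary and self-contained; what the paper's heavier machinery buys is the uniform treatment of the case $\dim\F(G)>0$ needed for Theorem \ref{thm:main2}, where compactness genuinely fails and the components escape along $\F(G)$. Two small points of hygiene: your phrase ``with the first $m$ components required to be nonnegative'' should not be read as an extra constraint on the minimization (that would interfere with the natural-constraint argument); the correct mechanism, which you in fact use at the end, is that of Remark \ref{rem:positive} --- replace $u_i$ by $|u_i|$ \emph{after} minimizing, which is admissible since $\phi_i\equiv 1$ and leaves $\cJ^\ell_\eps$ and the Nehari identities unchanged. And in passing to the limit, weak lower semicontinuity only gives $\|w_i^\star\|^2\le \mu_i\int_{\rn}|w_i^\star|^p+\cdots$, so $\bf w^\star$ need not lie in $\cN^\ell_\infty$ a priori; one must first project via Lemma \ref{N}$(ii)$ and then use the energy identity $c_{\eps_k}\to c^\ell_\infty$ (Lemma \ref{lem:limit_energy}) to conclude that the projection is trivial and the convergence is strong, as in the proof of Theorem \ref{thm:minimizers}. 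Your sketch elides this but the standard argument closes it.
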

	
	\begin{theorem}\label{thm:main2}
		Let $N\geq 5$ and $\o=B_1(0)$. Then, for any given $0\leq m\leq\ell$ and any sequence $(\eps_k)$ of positive numbers converging to zero, there exists a solution $\bf u_k=(u_{1k},\ldots,u_{\ell k})$ to the system $(\mathscr S_{\eps_k,\o})$ whose first $m$ components are positive and whose last $\ell-m$ components are nonradial and change sign, with the following limit profile: 
		
		For each $i=1,\ldots,\ell$, there exist a sequence $(\xi_{ik})$ in $B_1(0)$ and a nontrivial solution $v_i$ to the problem \eqref{P_i} such that, after passing to a subsequence,
		\[\lim_{k \to \infty}\eps_k^{-1} \mathrm{dist}(\xi_{ik},\partial B_1(0)) = \infty,\quad\lim_{k \to \infty}\eps_k^{-1}|\xi_{ik}-\xi_{jk}|=\infty\text{ if }i\neq j,\quad\displaystyle\lim_{k\to\infty}\|u_{ik}-v_{i}(\eps_k^{-1}( \ \cdot \ -\xi_{ik}))\|_{\eps_k}=0.\]
		The functions $v_1,\ldots,v_m$ are positive and radial, while the functions $v_{m+1},\ldots,v_\ell$ are sign-changing, nonradial and satisfy
		\begin{equation} \label{eq:symmetries2}
			\begin{cases}
				v_i(z_1,z_2,x)=v_i(\mathrm{e}^{\mathrm{i}\vartheta} z_1,\mathrm{e}^{\mathrm{i}\vartheta} z_2,gx) &\text{for all \ }\vartheta\in[0,2\pi), \ g\in O(N-4), \\
				v_i(z_1,z_2,x)=-v_i(z_2,z_1,x),
			\end{cases}
		\end{equation}
		for all $(z_1,z_2,x)\in\cc\times\cc\times\r^{N-4}\equiv\rn$, $i=m+1,\ldots,\ell$. Furthermore,
		\[\lim_{k\to\infty}\displaystyle\sum_{i=1}^\ell\|u_{ik}\|_{\eps_k}^2=\displaystyle\sum_{i=1}^\ell\|v_i\|^2=:\mathfrak{c}_m,\]
satisfies $\mathfrak{c}_m<\widehat{\mathfrak{c}}_m$, with $\widehat{\mathfrak{c}}_m$ as in \emph{Theorem \ref{thm:main1}}, if $N\geq 6$.
	\end{theorem}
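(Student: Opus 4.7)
For each $i=1,\ldots,\ell$ I would first select a least-energy nontrivial solution $v_i$ of $(\mathscr{P}_i)$ in a suitable symmetry class: the positive radial ground state if $i\leq m$, and, if $i>m$, a solution satisfying the symmetries in \eqref{eq:symmetries2}, whose existence for $N\geq 5$ follows from the compact embedding of the corresponding fixed-point subspace of $H^1(\rn)$ into $L^p(\rn)$, along the lines of \cite{BW,LU,CSr}. Denote by $I_i$ the energy functional of $(\mathscr{P}_i)$ and by $J_\eps$ that of $(\mathscr{S}_{\eps,\Omega})$. Setting $\mathfrak{p}_i:=I_i(v_i)=\tfrac{p-2}{2p}\|v_i\|^2$ (by the single-equation Nehari identity), I define $\mathfrak{c}_m:=\sum_i\|v_i\|^2$.

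Next, for each $k$ I would choose $\ell$ distinct centers $\xi_1^k,\ldots,\xi_\ell^k\in B_1(0)$ with $\eps_k^{-1}\mathrm{dist}(\xi_i^k,\partial B_1(0))\to\infty$ and $\eps_k^{-1}|\xi_i^k-\xi_j^k|\to\infty$---for example on a common symmetry axis of $B_1(0)$, so that the local symmetries of the sign-changing spikes are realized as restrictions of global symmetries of the ball---and define the multibump ansatz $W_i^k(x):=\eta_k(x-\xi_i^k)\,v_i(\eps_k^{-1}(x-\xi_i^k))$ for a smooth cutoff $\eta_k$. The separation of the centers makes every cross-coupling integral $o(1)$, so $J_{\eps_k}(\mathbf{W}_k)\to\tfrac{p-2}{2p}\mathfrak{c}_m$. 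I would then promote $\mathbf{W}_k$ to an exact solution $\mathbf{u}_k$ either by a variational Lyapunov--Schmidt reduction along the spike-position manifold (solving modulo the finite-dimensional kernel of translations and optimizing over the centers), or by minimizing $J_{\eps_k}$ on a Nehari--Pankov manifold inside a symmetric subspace encoding the local structure at each center. A concentration-compactness analysis around each $\xi_{ik}$ then yields $\|u_{ik}-v_i(\eps_k^{-1}(\cdot-\xi_{ik}))\|_{\eps_k}\to 0$ and the asserted separation of centers.

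For the strict inequality when $N\geq 6$, I would exploit the component Nehari identity for the coupled ground state $\mathbf{w}$ of Theorem~\ref{thm:main_rn}:
\[
\|w_i\|^2=\mu_i\int|w_i|^p+\sum_{j\neq i}\lambda_{ij}\beta_{ij}\int|w_i|^{\beta_{ij}}|w_j|^{\alpha_{ij}}.
\]
The symmetries \eqref{eq:symmetries} force every $w_i$ to be centered at the origin of $\rn$, so the cross-integrals are strictly positive; together with $\lambda_{ij}<0$ this gives $\|w_i\|^2<\mu_i\int|w_i|^p$, hence the scaling factor $t_i$ that puts $w_i$ on the Nehari manifold of $(\mathscr{P}_i)$ satisfies $t_i<1$. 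Since $w_i$---or its Schwarz symmetrization $w_i^*$ when $i\leq m$, which is positive radial and has no larger $H^1$-norm---lies in the admissible class defining $\mathfrak{p}_i$, the standard monotonicity of the Nehari maximum gives $\mathfrak{p}_i\leq\tfrac{p-2}{2p}t_i^2\|w_i\|^2<\tfrac{p-2}{2p}\|w_i\|^2$; summing over $i$ yields $\mathfrak{c}_m<\widehat{\mathfrak{c}}_m$.

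The hardest point, in my view, is the passage from the multibump ansatz $\mathbf{W}_k$ to an exact solution $\mathbf{u}_k$: one must rule out the ``collapse'' scenario in which two or more spike centers converge to a common limit on the $\eps_k$-scale, which would produce a coupled limit profile of energy at least $\widehat{\mathfrak{c}}_m$. Excluding this relies precisely on the strict inequality just established, so the existence argument and the strict inequality are interlocked.
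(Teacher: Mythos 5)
Your overall strategy (prescribe separated spike centers, glue rescaled profiles, then correct the ansatz) is not the paper's, and it has genuine gaps. First, the building blocks themselves: you assert that for $N\geq 5$ a least energy solution of \eqref{P_i} with the symmetries \eqref{eq:symmetries2} exists ``by compact embedding of the corresponding fixed-point subspace.'' For $N=5$ that embedding is \emph{not} compact: the kernel of the relevant homomorphism is $S^1\times O(1)$, and points $(0,0,x)$ with $x\neq 0$ have finite, non-singleton orbits, so condition $(A_2)$ fails --- this is exactly why Theorems \ref{thm:main_rn} and \ref{thm:main1} exclude $N=5$. The paper instead minimizes in the weaker symmetry class of Example \ref{examples}$(ii)$ (where only the $S^1$-invariance and the anti-invariance under $\tau$ are imposed, and $\F(\Gamma)=\{0\}\times\r^{N-4}$ is positive-dimensional), and then upgrades the minimizer to an $O(N-4)$-invariant one by a Lopes-type reflection/translation argument. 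Second, the gluing step: a Lyapunov--Schmidt reduction around the nonradial sign-changing profiles requires their nondegeneracy, which is not available; and ``minimizing on a Nehari--Pankov manifold inside a symmetric subspace encoding the local structure at each center'' is not a construction. The paper avoids gluing entirely: it minimizes $\cJ^\ell_{\eps_k}$ globally over $\cN^\ell_{\eps_k}(B_1(0))$ in the $\Gamma$-equivariant class (existence is immediate on a bounded domain, Theorem \ref{thm:minimizers}), and the separation of the spikes is an \emph{output} of the concentration-compactness analysis (Theorem \ref{thm:spliting}$(II)$), forced by the facts that $c_\infty^\ell=\sum_i c_i$ when $\F(G)$ has positive dimension and that this infimum is not attained.

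Relatedly, your closing claim that excluding spike collapse ``relies precisely on the strict inequality $\mathfrak{c}_m<\widehat{\mathfrak{c}}_m$'' cannot be right: the theorem holds for $N=5$, where $\widehat{\mathfrak{c}}_m$ is not even defined. In the paper, collapse is ruled out by energy accounting within the fixed-point-space framework (equality must hold throughout \eqref{eq:upperbd2}, forcing the cross terms to vanish) together with the unique continuation principle. Your argument for $\mathfrak{c}_m<\widehat{\mathfrak{c}}_m$ when $N\geq 6$ is essentially the paper's Proposition \ref{prop:energy_estimates}$(i)$,$(iii)$, but the step ``the symmetries force every $w_i$ to be centered at the origin, so the cross-integrals are strictly positive'' is not a proof: positivity of $\int|w_i|^{\beta_{ij}}|w_j|^{\alpha_{ij}}$ requires overlapping supports, which the paper obtains from unique continuation (if some cross term vanished, $w_i$ would solve \eqref{P_i} and vanish on a set of positive measure). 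With that fix, and noting that the least energies in the two symmetry classes for $i>m$ coincide (again via the reflection argument), your comparison chain does close.
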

	
	These last two results exhibit some interesting facts. The solutions given by Theorem \ref{thm:main2} are least energy solutions to the system \eqref{P_e} having some specific symmetries described in Example \ref{examples}$(ii)$ below. Their components behave as expected in the repulsive case $\lambda_{ij}<0$, i.e., they concentrate at points that are far from each other and from the boundary of the ball. On the other hand, the solutions given by Theorem \ref{thm:main1} behave as shown by Lin and Wei  \cite[Theorem 1.1(2)]{LW} for the attractive case $\lambda_{ij}>0$, i.e., all components concentrate at the origin. The solutions given by Theorem \ref{thm:main1} enjoy the same symmetries as those given by Theorem \ref{thm:main2} (and more), but they have higher energy. This shows that the sign of the interaction coefficient $\lambda_{ij}$ is not determinant in the segregation behavior of higher energy solutions.
	
	Another interesting feature of the solutions given by Theorem \ref{thm:main1} is that the limit profiles of their components are coupled, i.e., they solve the system \eqref{Sl_i}, up to rescaling. A similar behavior can be seen for least energy positive radial solutions to \eqref{P_e} in the unit ball: their limit profile is a positive least energy radial solution to the system \eqref{Sl_i}, see Theorem \ref{thm:radial}. As far as we know, these are the first examples in the literature that show this kind of asymptotic behavior.
	
	To prove our results we follow the approach introduced in \cite{CSr}. Using concentration compactness techniques, we carry out a careful analysis of the behavior of minimizing sequences for the system \eqref{Sl_i} that have a specific type of symmetries, described in Section \ref{sec:preliminaries}. The symmetries can be chosen to produce a change of sign by construction. As $\eps\to 0$ the components of the system concentrate at fixed points of the group action. If the space of fixed points is trivial, all components will necessarily concentrate at the origin, but they will move far away from each other when the fixed-point space has positive dimension. A detailed description is given in Theorem \ref{thm:spliting}.
	
	We note that for $N=3$ there are no symmetries with the properties required to produce nonradial sign-changing solutions. For the single equation \eqref{P_i} the existence of this kind of solutions was shown by Musso, Pacard and Wei in \cite{MPW} using the Lyapunov-Schmidt reduction procedure.

We wish to point out that solutions with sign-changing components for a system related to the existence of optimal partitions for the eigenvalue problem were obtained by Tavares and Terracini in \cite{tt}. For the system $(\mathscr S_{1,\o})$ with fixed $\eps=1$ and cubic nonlinearity (hence, $N\leq 3$) in a bounded domain $\o$, existence and multiplicity of sign-changing and semi-nodal solutions (some components change sign and others are positive) have been established by several authors, see, e.g., \cite{sw1, sw2, clz}.
		
This paper is organized as follows: in Section \ref{sec:preliminaries} we introduce the symmetric variational setting for systems with positive and sign-changing components and establish the existence of minimizers for the system \eqref{P_e}. In Section \ref{sec:limit_system} we give a detailed description of the symmetric minimizing sequences for the system \eqref{Sl_i} and prove Theorem \ref{thm:main_rn}. In Section	\ref{sec:profiles} we state and prove a general result for the system \eqref{P_e} and derive Theorems \ref{thm:main1} and \ref{thm:main2} from it.

	\section{Minimizers with positive and sign-changing components}
	\label{sec:preliminaries}
	
	Let $G$ be a closed subgroup of the group $O(N)$ of linear isometries of $\rn$ and denote by $Gx:=\{gx:g\in G\}$ the $G$-orbit of $x\in\rn$. Let $\phi:G\to\z_2:=\{-1,1\}$ be a continuous homomorphism of groups with the following property:
	\begin{itemize}
		\item[$(A_1)$] If $\phi$ is surjective, then there exists $x_0\in\rn$ such that $Kx_0\neq Gx_0$ where $K:=\ker\phi$.
	\end{itemize} 
	Let $\t$ be an open subset of $\rn$ which is \emph{$G$-invariant}, i.e., $Gx\subset\t$ for every $x\in\t$. Then, a function $u:\t\to\r$ is called \emph{$G$-invariant} if it is constant on $Gx$ for every $x\in\t$ and will be called \emph{$\phi$-equivariant} if
	$$ u(gx)=\phi(g)u(x) \text{ \ for all \ }g\in G, \ x\in\t.$$	
	Define 
	$$H_0^1(\t)^\phi:=\{u\in H_0^1(\t): u\text{ is }\phi\text{-equivariant}\}.$$
	Assumption $(A_1)$ guarantees that $H_0^1(\t)^\phi$ has infinite dimension, see \cite{JMW}. If $\phi\equiv 1$ is the trivial homomorphism, then $H_0^1(\t)^\phi$ is the space of $G$-invariant functions in $H_0^1(\t)$. On the other hand, if $\phi$ is surjective, then every nontrivial function $u\in H_0^1(\t)^\phi$ is nonradial and changes sign. The following examples are of relevance to our main results.
	
	\begin{examples} \label{examples}
		Consider the following examples of groups of isometries and homomorphims satisfying $(A_1)$:
		\begin{itemize}
			\item[$(i)$] For any group $G$ the trivial homomorphism $\phi\equiv 1$ is not surjective, so $(A_1)$ is trivially satisfied.
			\item[$(ii)$] Let $\Gamma$ be the group generated by $\{\mathrm{e}^{\mathrm{i}\vartheta}:\vartheta\in [0,2\pi)\} \cup \{\tau\}$ acting on $\rn$ by
			$$\mathrm{e}^{\mathrm{i}\vartheta}(z_1,z_2,x)=(\mathrm{e}^{\mathrm{i}\vartheta} z_1,\mathrm{e}^{\mathrm{i}\vartheta} z_2,x)\quad\text{and}\quad \tau(z_1,z_2,x)=(z_2,z_1,x)\qquad\forall \ (z_1,z_2,x)\in\cc\times\cc\times\r^{N-4}\equiv\rn,$$
			and let $\phi:\Gamma\to\z_2$ be the homomorphism given by $\phi(\mathrm{e}^{\mathrm{i}\vartheta}):=1$ and $\phi(\tau):=-1$. The kernel of $\phi$ is the group $K:=\{\mathrm{e}^{\mathrm{i}\vartheta}:\vartheta\in [0,2\pi)\}$ and the point $x_0=(1,0,0)\in\cc\times\cc\times\r^{N-4}$ is such that 
			$$Kx_0=\{(\mathrm{e}^{\mathrm{i}\vartheta},0,0):\vartheta\in [0,2\pi)\}\quad\text{and}\quad\Gamma x_0=\{(\mathrm{e}^{\mathrm{i}\vartheta},0,0):\vartheta\in [0,2\pi)\}\cup \{(0,\mathrm{e}^{\mathrm{i}\vartheta},0):\vartheta\in [0,2\pi)\},$$
			so $(A_1)$ is satisfied.
			\item[$(iii)$] Let $G:=\Gamma\times O(N-4)$ with $\Gamma$ as in $(ii)$ and $g\in O(N-4)$ acting as
			$$g(z_1,z_2,x)=(z_1,z_2,gx)\qquad\forall \ (z_1,z_2,x)\in\cc\times\cc\times\r^{N-4}\equiv\rn.$$
			Let $\phi:G\to \z_2$ be the homomorphism given by $\phi(\mathrm{e}^{\mathrm{i}\vartheta}):=1$, $\phi(\tau):=-1$ and $\phi(g):=1$ for $g\in O(N-4)$. Then $K:=\ker\phi=\{\mathrm{e}^{\mathrm{i}\vartheta}:\vartheta\in [0,2\pi)\}\times O(N-4)$ and, if $x_0=(1,0,0)\in\cc\times\cc\times\r^{N-4}$,
			$$Kx_0=\{(\mathrm{e}^{\mathrm{i}\vartheta},0,0):\vartheta\in [0,2\pi)\}\quad\text{and}\quad Gx_0=\{(\mathrm{e}^{\mathrm{i}\vartheta},0,0):\vartheta\in [0,2\pi)\}\cup \{(0,\mathrm{e}^{\mathrm{i}\vartheta},0):\vartheta\in [0,2\pi)\},$$
			so $(A_1)$ is satisfied.
		\end{itemize}
	\end{examples}
	
	For the remaining of this section we fix a closed subgroup $G$ of $O(N)$ and, for each $i=1,\ldots,\ell$, a continuous homomorphism $\phi_i:G\to\z_2$ satisfying $(A_1)$.
	
	If $\t$ is a $G$-invariant open subset of $\rn$, we consider the system
	\begin{equation}\label{S_ept}
		\tag{$\mathscr S^\phi_{\eps,\Theta}$} \qquad
		\begin{cases} 
			-\eps^2\Delta u_i+u_i=\mu_i|u_i|^{p-2}u_i + \displaystyle\sum\limits_{\substack{j=1 \\ j \not=i}}^\ell\lambda_{ij}\beta_{ij}|u_j|^{\alpha_{ij}}|u_i|^{\beta_{ij} -2}u_i,\\
			u_i \in H^1_0(\Theta)^{\phi_i}, \quad u_i\neq 0, \qquad i=1,\ldots,\ell,
		\end{cases}
	\end{equation}	 
	with $\eps>0$, $\mu_i>0$, $\lambda_{ij}=\lambda_{ji}<0$, $\alpha_{ij}, \beta_{ij}>1$, $\alpha_{ij}=\beta_{ji}$ and $\alpha_{ij} + \beta_{ij} = p\in (2,2^*)$.
	
		As usual, we consider $H^1_0(\t)$ as a subspace of $H^1(\rn)$, identifying a function $u\in H^1_0(\t)$ with its trivial extension to $\rn$. Set
	\[ \cH^\ell(\t):=H_0^1(\t)^{\phi_1}\times\cdots\times H_0^1(\t)^{\phi_\ell},
	\]
	and denote an element in $\cH^\ell(\t)$ by $\bf u=(u_1,\ldots,u_\ell)$. For each $\eps>0$ we define
	\[
	\|\bf u\|_{\ell,\eps}:= \left(\displaystyle\sum_{i=1}^\ell\|u_i\|_\eps^2\right)^{1/2}, \quad \text{where \ }
	\|u_i\|_\eps^2 := \frac{1}{\eps^N}\displaystyle\irn\Big[\eps^2|\nabla u_i|^2 + u_i^2\Big].
	\]
	If $\eps=1$ we write $\|\bf u\|_{\ell}$ instead of $\|\bf u\|_{\ell,1}$. Note that $\|\cdot\|_\eps$ is a norm in $H_0^1(\t)$ equivalent the standard one $\|\cdot\|:=\|\cdot\|_1$. Therefore, $\|\cdot\|_{\ell,\eps}$ is a norm in $\cH^\ell(\t)$ for every $\eps>0$ and all of these norms are equivalent.
	
	Consider the functional $\cJ^\ell_\eps : \cH^\ell(\t) \to \r$ given by
	\[
	\cJ^\ell_\eps(\bf u) := \dfrac{1}{2}\displaystyle\sum_{i=1}^\ell\|u_i\|_\eps^2 - \dfrac{1}{p}\displaystyle\sum_{i=1}^\ell\dfrac{1}{\eps^N}\int_{\mathbb{R}^N}\mu_i|u_i|^p - \dfrac{1}{2}\displaystyle\sum\limits_{\substack{i,j=1 \\ j\not=i}}^\ell\dfrac{1}{\eps^N}\int_{\mathbb{R}^N}\lambda_{ij}|u_j|^{\alpha_{ij}}|u_i|^{\beta_{ij}},
	\]
	which is of class $\mathcal{C}^1$. Since $\lambda_{ij} = \lambda_{ji}$, $\beta_{ij}=\alpha_{ji}$ and $\alpha_{ij}+\beta_{ij}=p$, its partial derivatives are given by
	\begin{align} \label{dJ}
		\partial_i\cJ^\ell_\eps(\bf u)v &= \dfrac{1}{\eps^N}\int_{\mathbb{R}^N}\Big(\eps^2\nabla u_i \cdot \nabla v + u_iv\Big) - \dfrac{1}{\eps^N}\int_{\mathbb{R}^N}\mu_i|u_i|^{p-2}u_iv \nonumber\\
		&\qquad- \dfrac{1}{2}\displaystyle\sum\limits_{\substack{j=1 \\ j \not=i}}^\ell\dfrac{1}{\eps^N}\int_{\mathbb{R}^N}\lambda_{ij}\beta_{ij}|u_j|^{\alpha_{ij}}|u_i|^{\beta_{ij}-2}u_iv - \dfrac{1}{2}\displaystyle\sum_{\substack{j=1\\j\not=i}}^\ell\dfrac{1}{\eps^N}\int_{\mathbb{R}^N}\lambda_{ij}\alpha_{ji}|u_i|^{\alpha_{ji}-2}u_iv|u_j|^{\beta_{ji}}\nonumber\\
		&=\frac{1}{\eps^N}\Big[\displaystyle\irn(\eps^2\nabla u_i \cdot \nabla v + u_iv) - \displaystyle\int_{\mathbb{R}^N}\mu_i|u_i|^{p-2}u_iv - \displaystyle\sum_{\substack{j=1\\j\not=i}}^\ell\int_{\mathbb{R}^N}\lambda_{ij}\beta_{ij}|u_j|^{\alpha_{ij}}|u_i|^{\beta_{ij}-2}u_iv\Big],
	\end{align}
	for $v \in H^1_0(\t)^{\phi_i}$ and $i=1,\ldots,\ell.$ So, by the principle of symmetric criticality, the solutions to system \eqref{S_ept} are the critical points of $\cJ^\ell_\eps$ whose components $u_i$ are nontrivial. They belong to the Nehari-type set
	\begin{equation*}
		\cN^\ell_{\eps}(\t) := \Big\{\bf u\in \cH^\ell(\t) : u_i \neq 0, \ \partial_i\cJ^\ell_\eps(\bf u)u_i=0, \ \forall \ i = 1,\ldots,\ell\Big\}.
	\end{equation*}
	Note that
	\begin{equation} \label{eq:nehari}
		\partial_i\cJ^\ell_\eps(\bf u)u_i=\|u_i\|^2_\eps -  \frac{1}{\eps^N}\int_{\mathbb{R}^N}\mu_i|u_i|^p - \displaystyle\sum_{\substack{j=1\\j\not=i}}^\ell\frac{1}{\eps^N}\int_{\mathbb{R}^N}\lambda_{ij}\beta_{ij}|u_j|^{\alpha_{ij}}|u_i|^{\beta_{ij}}.
	\end{equation}
	Define 
	\begin{equation*}
		c_\eps^\ell(\t) := \inf_{\bf u\in \cN^\ell_{\eps}(\t)}\cJ^\ell_\eps(\bf u).
	\end{equation*}
	From \eqref{dJ} one sees that
	\begin{equation} \label{eq:J_nehari}
		\cJ^\ell_\eps(\bf u) = \dfrac{p-2}{2p}\displaystyle\sum_{i=1}^\ell \|u_i\|_\eps^2 = \dfrac{p-2}{2p}\|\bf u\|^2_{\ell,\eps} \qquad \text{if} \ \bf u = (u_1,u_2,\ldots,u_\ell) \in \cN^\ell_{\eps}(\t).
	\end{equation}
	
	\begin{remark} \label{rem:norms}
		\emph{For $\bf u=(u_1,\ldots,u_\ell)\in \cH^\ell(\t)$ and $\eps>0$, define $\tilde u_i(z):=u_i(\eps z)$. It is straightforward to verify that $\tilde u_i\in H^1(\rn)^{\phi_i}$ and
			\[
			\|u_i\|^2_\eps=\|\tilde u_i\|^2,\qquad\frac{1}{\eps^N}\displaystyle\irn|u_i|^p=\displaystyle\irn|\tilde u_i|^p,\qquad\frac{1}{\eps^N}\displaystyle\irn|u_j|^{\alpha_{ij}}|u_i|^{\beta_{ij}}=\displaystyle\irn |\tilde u_j|^{\alpha_{ij}}|\tilde u_i|^{\beta_{ij}}.\]
		}
	\end{remark}
	
	For $\bf u=(u_1,\ldots,u_\ell)\in\cH^\ell(\t)$ and $\bf t=(t_1,\ldots,t_\ell)\in(0,\infty)^\ell$, we write 
	$\bf t\bf u:=(t_1u_1,\ldots,t_\ell u_\ell).$
	Our next results provide some useful information about the Nehari-type set $\cN^\ell_\eps(\t)$.
	
	\begin{lemma}\label{N}
		The following statements hold true:
		\begin{itemize}
			\item[$(i)$]$\cN^\ell_\eps(\t)\not=\emptyset$ and there exists a constant $c_0>0$, independent of $\eps$, such that
			\[
			\|u_i\|_\eps^2 > c_0\quad\text{for every \ }\bf u = (u_1,\ldots,u_\ell) \in \cN^\ell_\eps(\t), \ i=1,\ldots,\ell.
			\]
			Therefore $\cN^\ell_\eps(\t)$ is a closed subset of $\cH^\ell(\t)$ and $0<c_\eps^\ell(\t) <\infty$.
			\item[$(ii)$]For each $\bf u \in \cH^\ell(\t)$ such that
			\begin{equation} \label{cft}
				\displaystyle\irn\mu_i|u_i|^p +\displaystyle\sum_{\substack{j=1\\j\not=i}}^\ell\int_{\mathbb{R}^N}\lambda_{ij}\beta_{ij}|u_j|^{\alpha_{ij}}|u_i|^{\beta_{ij}}>0 \quad \forall \ i=1,\ldots,\ell,
			\end{equation}
			there exists a unique $\bf t_{\bf u} \in (0,\infty)^{\ell},$ such that $\bf t_{\bf u}\bf u \in \cN^\ell_\eps(\t)$. Furthermore,  
			\[
			\cJ^\ell_\eps(\bf t_{\bf u} \bf u) = \max_{{\bf t} \in (0,\infty)^\ell}\cJ^\ell_\eps(\bf t \bf u).
			\]
		\end{itemize}
	\end{lemma}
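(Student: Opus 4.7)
For part~$(i)$, the uniform lower bound $\|u_i\|^2_\eps>c_0$ drives everything. From the Nehari identity~\eqref{eq:nehari}, the sign $\lambda_{ij}<0$ renders every coupling term non-positive, so $\|u_i\|^2_\eps\le\mu_i\eps^{-N}\irn|u_i|^p$. Passing to the $\eps$-independent rescaling of Remark~\ref{rem:norms} and using the Sobolev embedding $H^1(\rn)\hookrightarrow L^p(\rn)$, this becomes $\|u_i\|^2_\eps\le C\mu_i\|u_i\|^p_\eps$ with $C$ independent of $\eps$ and $\t$, hence $\|u_i\|^2_\eps\ge c_0:=(C\max_i\mu_i)^{-2/(p-2)}$. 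For $\cN^\ell_\eps(\t)\ne\emptyset$ I would exploit the infinite-dimensionality of $H_0^1(\t)^{\phi_i}$ granted by $(A_1)$ to pick nonzero $u_i\in H_0^1(\t)^{\phi_i}$ with pairwise disjoint supports, via $G$-invariant truncation to concentric spherical shells (which are $G$-invariant because $G\subset O(N)$); the disjoint supports annihilate all cross integrals, reducing~\eqref{cft} to $\mu_i\irn|u_i|^p>0$, and part~$(ii)$ then furnishes $\bf t_{\bf u}$ with $\bf t_{\bf u}\bf u\in\cN^\ell_\eps(\t)$. Closedness of $\cN^\ell_\eps(\t)$ follows from the uniform bound together with continuity of the partial derivatives, and~\eqref{eq:J_nehari} yields $c^\ell_\eps(\t)\ge\tfrac{p-2}{2p}\ell c_0>0$.

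For part~$(ii)$, fix $\bf u$ satisfying~\eqref{cft} and study the fibering map $\Phi(\bf t):=\cJ^\ell_\eps(\bf t\bf u)$ on $(0,\infty)^\ell$; its critical points are exactly the $\bf t$ for which $\bf t\bf u\in\cN^\ell_\eps(\t)$. Existence of $\bf t_{\bf u}$ rests on a coercivity estimate. Using the Young-type inequality $t_j^{\alpha_{ij}}t_i^{\beta_{ij}}\le\tfrac{\alpha_{ij}}{p}t_j^p+\tfrac{\beta_{ij}}{p}t_i^p$ (from $\alpha_{ij}+\beta_{ij}=p$) together with the symmetries $\alpha_{ij}=\beta_{ji}$ and $\lambda_{ij}=\lambda_{ji}$, I would show that the degree-$p$ part of $\Phi(\bf t)$ is bounded above by
\[
-\tfrac{1}{p\eps^N}\sum_{i=1}^\ell t_i^p\Bigl[\mu_i\irn|u_i|^p+\sum_{j\ne i}\lambda_{ij}\beta_{ij}\irn|u_j|^{\alpha_{ij}}|u_i|^{\beta_{ij}}\Bigr],
\]
which is strictly negative on $(0,\infty)^\ell$ by~\eqref{cft}. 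Combined with $\Phi(\bf 0)=0$ and the positive quadratic growth of $\Phi$ near the origin, this forces $\Phi$ to attain a positive interior maximum, necessarily a critical point, and thus supplies the desired $\bf t_{\bf u}$.

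For uniqueness, suppose $\bf s,\bf t\in(0,\infty)^\ell$ both correspond to elements of $\cN^\ell_\eps(\t)$, set $r_i:=t_i/s_i$, and pick $i_0$ with $r_{i_0}=\max_i r_i$. From $r_j\le r_{i_0}$ and $\alpha_{i_0j}+\beta_{i_0j}=p$ one gets $t_j^{\alpha_{i_0j}}t_{i_0}^{\beta_{i_0j}-2}\le r_{i_0}^{p-2}s_j^{\alpha_{i_0j}}s_{i_0}^{\beta_{i_0j}-2}$; multiplying by the negative factor $\lambda_{i_0j}\beta_{i_0j}$ reverses this inequality, and inserting into the Nehari identity~\eqref{eq:nehari} for $\bf t\bf u$ at component $i_0$ and comparing with the identity for $\bf s\bf u$ yields $\|u_{i_0}\|^2_\eps\ge r_{i_0}^{p-2}\|u_{i_0}\|^2_\eps$, so $r_{i_0}\le1$; the symmetric argument at the minimum index gives $\min_i r_i\ge1$, forcing $\bf s=\bf t$. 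The maximum characterization is then immediate: by coercivity $\Phi$ attains its supremum in the interior of $(0,\infty)^\ell$, where it is a critical point, and uniqueness pins that critical point down as $\bf t_{\bf u}$. The main obstacle, in my view, is the coercivity step: because the couplings with $\lambda_{ij}<0$ contribute positively to $\Phi$, the map is not \emph{a priori} bounded above on $(0,\infty)^\ell$, and extracting the strictly negative degree-$p$ coefficient via the Young-type inequality and~\eqref{cft} is precisely where the competitive sign structure and the homogeneity constraint $\alpha_{ij}+\beta_{ij}=p$ meet in an essential way.
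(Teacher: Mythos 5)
Your proof is correct in substance. Part $(i)$ follows the paper's own route: the lower bound via the Nehari identity, the sign of $\lambda_{ij}$, the rescaling of Remark \ref{rem:norms} and the Sobolev embedding is exactly the published argument, and nonemptiness via disjointly supported equivariant functions (which annihilate all coupling terms) is likewise the paper's construction --- the paper simply rescales each component separately so that $\|v_i\|^2_\eps=\eps^{-N}\irn\mu_i|v_i|^p$ rather than invoking part $(ii)$, but both are legitimate. The real divergence is in part $(ii)$: the paper reduces to the statement that $\mathbf{t}\mathbf{u}\in\cN^\ell_\eps(\Theta)$ iff $\mathbf{t}$ is a critical point of the fibering map and then cites \cite[Lemmas 2.1 and 2.2]{CSz}, whereas you supply a self-contained proof. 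Your Young-inequality computation checks out: with $c_{ij}:=\eps^{-N}\irn|\lambda_{ij}|\,|u_j|^{\alpha_{ij}}|u_i|^{\beta_{ij}}=c_{ji}$ and $\alpha_{ij}=\beta_{ji}$, symmetrizing $\tfrac12\sum_{i\neq j}\bigl(\tfrac{\alpha_{ij}}{p}t_j^p+\tfrac{\beta_{ij}}{p}t_i^p\bigr)c_{ij}$ gives $\sum_{i\neq j}\tfrac{\beta_{ij}}{p}t_i^pc_{ij}$, which produces exactly the bracket appearing in \eqref{cft} and hence $\cJ^\ell_\eps(\mathbf{t}\mathbf{u})\to-\infty$ as $|\mathbf{t}|\to\infty$. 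The max-ratio uniqueness argument is also correct; note that the inequality $t_j^{\alpha_{i_0j}}t_{i_0}^{\beta_{i_0j}-2}\le r_{i_0}^{p-2}s_j^{\alpha_{i_0j}}s_{i_0}^{\beta_{i_0j}-2}$ uses only $t_j\le r_{i_0}s_j$ together with the exact identity $t_{i_0}=r_{i_0}s_{i_0}$, so it survives the case $\beta_{i_0j}<2$. What you gain is a proof that does not outsource the heart of the lemma; what the paper gains is brevity.

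One step is stated too loosely: you rule out boundary maxima of the fibering map $\Phi$ only by its behavior at the origin. Coercivity together with $\Phi(\mathbf{0})=0$ and positive growth near $\mathbf{0}$ yields a maximizer $\mathbf{t}^*\in[0,\infty)^\ell$ with $\Phi(\mathbf{t}^*)>0$, but this does not exclude $t^*_{i_0}=0$ for some $i_0$ while the other components are positive, and such boundary points can carry positive values of $\Phi$. The fix is to apply your quadratic-growth observation at every boundary face: if $t^*_{i_0}=0$, then $\Phi(\mathbf{t}^*+he_{i_0})-\Phi(\mathbf{t}^*)=\tfrac{h^2}{2}\|u_{i_0}\|^2_\eps+(\text{non-negative coupling terms})-O(h^p)>0$ for small $h>0$, since $u_{i_0}\neq0$ by \eqref{cft} and $p>2$; hence no maximizer lies on the boundary and $\mathbf{t}^*$ is an interior critical point. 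With this one-line addition your argument is complete.
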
	
	
	\begin{proof}
		$(i)$ Since $\phi_i$ satisfies $(A_1)$, for each $i=1,\ldots,\ell$ one can choose $u_i\in \cC_c^\infty(\t)\cap H_0^1(\t)^{\phi_i}$, $u_i\not= 0$, such that $\supp(u_i)\displaystyle\cap\supp(u_j) = \emptyset$ for all $j\not=i$, and then take $t_i>0$ such that $v_i:=t_iu_i$ satisfies $\|v_i\|_\eps^2 = \eps^{-N}\displaystyle\irn\mu_i|v_i|^p$. Since ${\supp(v_i)\cap \supp(v_j)=\emptyset}$ for all $j\not=i$, setting ${\bf v := (v_1,\ldots,v_\ell)}$ we get that $\partial_i\cJ^\ell_\eps(\bf v)v_i=0$ and $v_i\not=0$ for all $i=1,\ldots,\ell$. Therefore $\bf v \in \cN^\ell_\eps(\t)$. 
		
		Let $\bf u = (u_1,\ldots,u_\ell) \in \cN^\ell_\eps(\t)$ and set $\tilde u_i(z):=u_i(\eps z)$. Then, since $\lambda_{ij}<0$ for every pair $i\not=j$, from Remark \ref{rem:norms} and Sobolev's embedding theorem we derive 
		\[
		\|\tilde u_i\|^2\leq\displaystyle\displaystyle\irn\mu_i|\tilde u_i|^p\leq C\mu_i\|\tilde u_i\|^p,
		\]
		with $C$ a positive constant, depending only on $N$ and $p$. This yields the estimate in statement $(i)$ and completes its proof.
		
		$(ii)$ Fix $\bf u=(u_1,\ldots,u_\ell)\in\cH^\ell(\t)$ and define $J^\ell_{\bf u}:(0,\infty)^\ell \to\r$ by
		\[
		J^\ell_{\bf u}(\bf t):=\cJ^\ell_\eps(\bf t\bf u) = \displaystyle\sum_{i=1}^\ell a_{u,i}t_i^2 - \displaystyle\sum_{i=1}^\ell b_{u,i}t_i^p + \displaystyle\sum\limits_{\substack{i,j=1 \\ j \neq i}}^\ell d_{u,ij} t_j^{\alpha_{ij}}t_i^{\beta_{ij}},
		\]
		where $\bf t=(t_1,\ldots,t_\ell)\in(0,\infty)^\ell$, 
		\[a_{u,i}:=\frac{1}{2}\|u_i\|^2_\eps,\quad b_{u,i}:=\frac{1}{p\eps^N}\displaystyle\irn\mu_i|u_i|^p,\quad d_{u,ij}:=\frac{1}{2\eps^N}\displaystyle\sum_{\substack{j=1\\j\neq i}}^\ell\displaystyle\irn|\lambda_{ij}|\,\beta_{ij}|u_j|^{\alpha_{ij}}|u_i|^{\beta_{ij}}.
		\]
		If $u_i\neq 0$ for all $i=1,\ldots,\ell$, then, as
		\[
		t_i\,\partial_i J^\ell_{\bf u}(\bf t)=\partial_i\cJ^\ell_\eps(\bf t\bf u)[t_iu_i],\qquad i=1,\ldots,\ell,
		\]
		we have that $\bf t\bf u\in\cN^\ell(\t)$ iff  $\bf t$ is a critical point of $J^\ell_{\bf u}$. Note that \eqref{cft} implies that $u_i\neq 0$ for all $i=1,\ldots,\ell$. Hence, $(ii)$ follows from \cite[Lemmas 2.1 and 2.2]{CSz}.
	\end{proof} 
	
	\begin{theorem}\label{ES}
		If there exists $\bf u\in\cN^\ell_\eps(\t)$ such that $\cJ^\ell_\eps(\bf u)=c_\eps^\ell(\t)$, then $\bf u=(u_1,\ldots,u_\ell)$ is a solution to  system \eqref{S_ept}. 
	\end{theorem}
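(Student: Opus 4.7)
The plan is to argue by Lagrange multipliers. The Nehari-type set $\cN^\ell_\eps(\t)$ is the zero set, within the open subset $\{\bf v\in\cH^\ell(\t):v_i\neq 0\ \forall i\}$, of the $\ell$ smooth functionals $F_i(\bf v):=\partial_i\cJ^\ell_\eps(\bf v)v_i$. The whole proof then reduces to the invertibility of the $\ell\times\ell$ matrix $M=(M_{ij})$ at $\bf u$ defined by
$$M_{ij}:=\partial_j F_i(\bf u)u_j,\qquad i,j=1,\ldots,\ell;$$
from this single fact, both the regularity of the constraints and the vanishing of the Lagrange multipliers at the minimizer will follow at once.

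A direct differentiation using \eqref{dJ}, followed by substitution of the Nehari identity \eqref{eq:nehari}, gives
\begin{align*}
M_{ii} &= -(p-2)\|u_i\|_\eps^2 + \sum_{k\neq i}\frac{\alpha_{ik}\beta_{ik}}{\eps^N}\int_{\rn}\lambda_{ik}|u_k|^{\alpha_{ik}}|u_i|^{\beta_{ik}}, \\
M_{ij} &= -\frac{\alpha_{ij}\beta_{ij}}{\eps^N}\int_{\rn}\lambda_{ij}|u_j|^{\alpha_{ij}}|u_i|^{\beta_{ij}}\qquad(i\neq j),
\end{align*}
where in the first identity I use $p-\beta_{ik}=\alpha_{ik}$. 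Since $\lambda_{ij}<0$ and $p>2$, the diagonal entries are strictly negative and the off-diagonals strictly positive, and the negative contribution to $M_{ii}$ from the $k\neq i$ terms is exactly the negative of the $i$-th off-diagonal row sum. Hence
$$|M_{ii}|-\sum_{j\neq i}|M_{ij}|=(p-2)\|u_i\|_\eps^2>0$$
by Lemma~\ref{N}$(i)$, so $M$ is strictly row-diagonally dominant and therefore invertible.

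With $M$ invertible, the remaining steps are formal. If $\sum_k\sigma_kF_k'(\bf u)=0$ in $\cH^\ell(\t)^*$, evaluating at the vector $(0,\ldots,0,u_j,0,\ldots,0)$ with $u_j$ in slot $j$ for each $j$ yields $M^T\sigma=0$, hence $\sigma=0$; the constraints $F_1,\ldots,F_\ell$ are thus regular at $\bf u$ and $\cN^\ell_\eps(\t)$ is a $\mathcal{C}^1$ submanifold of codimension $\ell$ near $\bf u$. The Lagrange multiplier rule then produces $\sigma_1,\ldots,\sigma_\ell\in\r$ with $(\cJ^\ell_\eps)'(\bf u)=\sum_k\sigma_kF_k'(\bf u)$; evaluating this identity at the same test vectors and using that $\partial_j\cJ^\ell_\eps(\bf u)u_j=0$ since $\bf u\in\cN^\ell_\eps(\t)$, we obtain $M^T\sigma=0$, so once more $\sigma=0$. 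Therefore $(\cJ^\ell_\eps)'(\bf u)=0$ in $\cH^\ell(\t)^*$, and the principle of symmetric criticality—already invoked just after \eqref{eq:nehari}—promotes $\bf u$ to a weak solution of \eqref{S_ept}. The only nontrivial step is the diagonal-dominance computation, and it works precisely because the repulsive sign $\lambda_{ij}<0$ and the superquadratic exponent $p>2$ conspire so that the off-diagonal mass in each row cancels the nonlocal part of $M_{ii}$ and leaves the coercive term $(p-2)\|u_i\|_\eps^2$ behind; everything else is a formal consequence.
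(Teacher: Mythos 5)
Your argument is correct, but it is not the route the paper takes: the paper's proof is a one-line reference to \cite[Theorem 3.4(a)]{CSz}, where the Nehari-type set is handled \emph{without} differentiating the constraints, by reparametrizing $\cN^\ell_\eps(\t)$ over the set $\cU$ of normalized $\ell$-tuples via $\mathbf{v}\mapsto\mathbf{t}_{\mathbf{v}}\mathbf{v}$ and showing that a minimizer of the reduced functional $\Psi$ on $\cU$ is a critical point of $\cJ^\ell_\eps$. Your classical Lagrange-multiplier argument works here, and the key computation is right: by Euler's identity each $F_i(\mathbf{v})=\partial_i\cJ^\ell_\eps(\mathbf{v})v_i$ is a linear combination of the same building blocks $\|v_i\|_\eps^2$, $\int|v_i|^p$, $\int|v_j|^{\alpha_{ij}}|v_i|^{\beta_{ij}}$ that make $\cJ^\ell_\eps$ of class $\mathcal C^1$, so the constraints are genuinely $\mathcal C^1$ even though $\cJ^\ell_\eps$ need not be $\mathcal C^2$ when some $\beta_{ij}<2$ --- this point is worth stating explicitly, since it is exactly the issue the \cite{CSz} construction is designed to avoid. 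Your matrix entries and the identity $|M_{ii}|-\sum_{j\neq i}|M_{ij}|=(p-2)\|u_i\|_\eps^2>0$ check out (the only slip is cosmetic: the off-diagonal entries are nonnegative rather than strictly positive, since $u_i$ and $u_j$ could have disjoint supports, but diagonal dominance and invertibility are unaffected), and the vanishing of the multipliers then follows as you say. What each approach buys: yours is self-contained and elementary, resting only on strict diagonal dominance, which in turn comes from $\lambda_{ij}<0$ and $p>2$; the paper's citation route is more robust (no differentiability of the constraints needed) and, more importantly, the homeomorphism $\cU\cong\cN^\ell_\infty$ it sets up is reused later in the proof of Theorem \ref{thm:multiplicity}, so the authors get the genus/multiplicity machinery for free.
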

	
	\begin{proof}
		The proof is practically the same as that of \cite[Theorem 3.4(a)]{CSz} with the obvious modifications.
	\end{proof}
	
	\begin{definition}
		A solution to the system \eqref{S_ept} such that $\cJ^\ell_\eps(\bf u)=c_\eps^\ell(\t)$ will be called a least energy solution to \eqref{S_ept}. 
	\end{definition}	
	
	\begin{remark} \label{rem:positive}
		\emph{If $\phi_i\equiv 1$ for every $i$ in some subset $I$ of $\{1,\ldots,\ell\}$ and $\bf u$ is a least energy solution to \eqref{S_ept}, then, setting $v_i:=|u_i|$ if $i\in I$ and $v_i:=u_i$ if $i\not\in I$, we have that $v_i\in H^1_0(\t)^{\phi_i}$, \ $\bf v=(v_1,\ldots,v_\ell)\in\cN^\ell_\eps(\t)$ and $\cJ^\ell_\eps(\bf v)=\cJ^\ell_\eps(\bf u)$. Hence, $\bf v$ is a least energy solution to \eqref{S_ept} whose $i$-th component is positive for every $i\in I$.}
	\end{remark}
	
	\begin{theorem}[Existence of minimizers] \label{thm:minimizers} 
		If $\t$ is a bounded $G$-invariant domain in $\rn$, then, for each $\eps >0$, the system \eqref{S_ept}
		has a least energy solution.
	\end{theorem}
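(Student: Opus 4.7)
The plan is to show that the infimum $c_\eps^\ell(\t)$ is attained by a minimizing sequence, and then invoke Theorem~\ref{ES} to conclude that the minimizer is a solution of \eqref{S_ept}. The boundedness of $\t$ is the decisive ingredient: it makes the embeddings $H_0^1(\t)\hookrightarrow L^q(\t)$ compact for every $q\in[1,2^*)$, which prevents the minimizing sequence from spreading or concentrating and lets us pass to the limit in all the subcritical nonlinearities. Fix $\eps>0$ and choose $(\bf u_k)\subset\cN^\ell_\eps(\t)$ with $\cJ^\ell_\eps(\bf u_k)\to c_\eps^\ell(\t)$. By the identity \eqref{eq:J_nehari} the sequence is bounded in $\cH^\ell(\t)$, so after extracting a subsequence we may assume $u_{ik}\rh u_i$ weakly in $H_0^1(\t)^{\phi_i}$ and $u_{ik}\to u_i$ strongly in $L^p(\t)$ (and a.e.) for every $i$.

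The first key step is to check that no component of $\bf u=(u_1,\ldots,u_\ell)$ vanishes. Here I would use the competitive sign $\lambda_{ij}<0$: since each $\bf u_k$ sits on $\cN^\ell_\eps(\t)$, identity \eqref{eq:nehari} combined with Sobolev's embedding and Remark~\ref{rem:norms} gives
\[
\|u_{ik}\|_\eps^2\;\leq\;\frac{1}{\eps^N}\displaystyle\irn\mu_i|u_{ik}|^p\;\leq\;C\mu_i\|u_{ik}\|_\eps^p,
\]
which, together with the uniform lower bound $\|u_{ik}\|_\eps^2>c_0$ of Lemma~\ref{N}$(i)$, produces a uniform positive lower bound on $\eps^{-N}\displaystyle\irn|u_{ik}|^p$. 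Strong $L^p$ convergence then forces $u_i\neq 0$ for every $i$.

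The second key step is to project $\bf u$ onto $\cN^\ell_\eps(\t)$. Passing to the limit in \eqref{eq:nehari} — strong $L^p$ convergence of each $u_{ik}$, together with Hölder (with conjugate exponents $p/\alpha_{ij}$ and $p/\beta_{ij}$) to handle the coupling integrals $\displaystyle\irn|u_{jk}|^{\alpha_{ij}}|u_{ik}|^{\beta_{ij}}$ — and using weak lower semicontinuity of $\|\cdot\|_\eps^2$, one obtains
\[
\displaystyle\irn\mu_i|u_i|^p+\sum_{j\neq i}\lambda_{ij}\beta_{ij}\displaystyle\irn|u_j|^{\alpha_{ij}}|u_i|^{\beta_{ij}}\;\geq\;\|u_i\|_\eps^2\;>\;0,
\]
so condition \eqref{cft} is satisfied and Lemma~\ref{N}$(ii)$ yields a unique $\bf t_{\bf u}\in(0,\infty)^\ell$ with $\bf t_{\bf u}\bf u\in\cN^\ell_\eps(\t)$. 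Because $\bf u_k\in\cN^\ell_\eps(\t)$, the maximum characterization in Lemma~\ref{N}$(ii)$ gives $\cJ^\ell_\eps(\bf t_{\bf u}\bf u_k)\leq\cJ^\ell_\eps(\bf u_k)$ for every $k$. Taking $\liminf$ and combining weak lower semicontinuity of the quadratic contributions $\|\cdot\|_\eps^2$ with the exact limits of the $p$-homogeneous integrals (by strong $L^p$ convergence and Hölder) yields $\cJ^\ell_\eps(\bf t_{\bf u}\bf u)\leq\liminf_k\cJ^\ell_\eps(\bf u_k)=c_\eps^\ell(\t)$; the reverse inequality is automatic since $\bf t_{\bf u}\bf u\in\cN^\ell_\eps(\t)$. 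Thus $\bf t_{\bf u}\bf u$ realizes the infimum, and Theorem~\ref{ES} finishes the proof.

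The main obstacle will be the nonvanishing of all components of the weak limit. Mere boundedness from Lemma~\ref{N}$(i)$ is insufficient because $\|\cdot\|_\eps^2$ is only \emph{lower} semicontinuous under weak convergence; one needs a positive lower bound on a quantity that is \emph{continuous} under strong $L^p$ convergence, and this is precisely what the competitive coupling $\lambda_{ij}<0$ provides through the Nehari identity \eqref{eq:nehari}. If any $\lambda_{ij}$ were positive this argument would break down and one would need to rule out partial concentration by a more delicate comparison with subsystems.
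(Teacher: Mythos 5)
Your proposal is correct and follows essentially the same route as the paper: bounded minimizing sequence via \eqref{eq:J_nehari}, compact Sobolev embedding on the bounded domain $\t$ to pass to the limit in \eqref{eq:nehari} and verify \eqref{cft}, projection onto $\cN^\ell_\eps(\t)$ via Lemma~\ref{N}$(ii)$, and weak lower semicontinuity to close the chain of inequalities. The only (immaterial) difference is that the paper goes one step further and deduces strong convergence $\bf u_k\to\bf u$, so that $\bf u$ itself lies in $\cN^\ell_\eps(\t)$, whereas you stop at the minimizer $\bf t_{\bf u}\bf u$ — which is all Theorem~\ref{ES} requires.
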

	
	\begin{proof} 
		Fix $\eps>0$ and let $\bf u_k=(u_{1k},\ldots,u_{\ell k})\in\cN^\ell_\eps(\t)$ satisfy $\cJ_\eps^\ell(\bf u_k)\to c_\eps^\ell(\t)$. From \eqref{eq:J_nehari} we see that $(u_{ik})$ is bounded in $H_0^1(\t)$ for every $i=1,\ldots,\ell$. So, after passing to a subsequence, $u_{ik}\rh u_i$ weakly in $H_0^1(\t)^{\phi_1}$ and $u_{ik}\to u_i$ strongly in $L^p(\t)$. From \eqref{eq:nehari} and Lemma \ref{N} we get that
		$$0<c_0\leq \|u_{ik}\|^2_\eps = \frac{1}{\eps^N}\int_{\mathbb{R}^N}\mu_i|u_{ik}|^p + \displaystyle\sum_{\substack{j=1\\j\not=i}}^\ell\frac{1}{\eps^N}\int_{\mathbb{R}^N}\lambda_{ij}\beta_{ij}|u_{jk}|^{\alpha_{ij}}|u_{ik}|^{\beta_{ij}}$$
		and, passing to the limit, we obtain
		$$0<c_0\leq\displaystyle\int_{\mathbb{R}^N}\mu_i|u_i|^p + \displaystyle\sum_{\substack{j=1\\j\not=i}}^\ell\int_{\mathbb{R}^N}\lambda_{ij}\beta_{ij}|u_j|^{\alpha_{ij}}|u_i|^{\beta_{ij}}\qquad\text{for all \ }i=1,\ldots,\ell.$$
		Set $\bf u:=(u_1,\ldots,u_\ell)$. By Lemma \ref{N} there exists $\bf t=(t_1,\ldots,t_\ell)\in(0,\infty)^\ell$ such that $\bf t\bf u\in\cN^\ell_\eps(\t)$ and
		\begin{align*}
			c_\eps^\ell(\t) &\leq \cJ^\ell_\eps(\bf t\bf u) \leq\displaystyle\liminf_{k\to\infty}\cJ^\ell_\eps(\bf t\bf u_k) \leq\displaystyle\lim_{k\to \infty}\cJ^\ell_\eps(\bf u_k)= c_\eps^\ell(\t),		
		\end{align*}
		because $\bf u_k \in \cN^\ell_\eps(\t)$ for all $k\in \mathbb{N}$. Thus, $\lim\limits_{k\to\infty}\cJ^\ell_\eps(\bf t\bf u_k)=\cJ^\ell_\eps(\bf t\bf u)$. As $u_{ik}\to u_i$ strongly in $L^p(\t)$, this implies that $\lim\limits_{k\to\infty}\|\bf t\bf u_k\|_{\ell,\eps}^2=\|\bf t\bf u\|_{\ell,\eps}^2$. Since $\bf u_k\rh \bf u$ weakly in $\cH^\ell(\t)$, we derive that $\bf u_k\to\bf u$ strongly in $\cH^\ell(\t)$. As a consequence, $\bf u\in\cN^\ell_\eps(\t)$ and $\cJ^\ell_\eps(\bf u) = c_\eps^\ell(\t)$.
	\end{proof}

	\section{The limit system}	
	\label{sec:limit_system}
	
	We fix a closed subgroup $G$ of $O(N)$ and, for each $i=1,\ldots,\ell$, a continuous homomorphism $\phi_i:G\to\z_2$. We assume throughout this section that the $\phi_i$ satisfy $(A_1)$ and that $G$ has the following property:
	\begin{itemize}
		\item[$(A_2)$] For every $x\in\rn$, the $G$-orbit of $x$ is either infinite, or $Gx=\{x\}$.
	\end{itemize}
	
	Our aim is to study the behavior of minimizing sequences for the functional related to the system
	\begin{equation}\label{Sl_pi}
		\tag{$\mathscr S_{\infty,\ell}^\phi$}\qquad
		\begin{cases}
			-\Delta u_i+u_i=\mu_i|u_i|^{p-2}u_i + \displaystyle\sum\limits_{\substack{j=1 \\ j \not=i}}^\ell\lambda_{ij}\beta_{ij}|u_j|^{\alpha_{ij}}|u_i|^{\beta_{ij} -2}u_i,\\
			u_i \in H^1(\rn)^{\phi_i}, \quad u_i\neq 0, \qquad 1<i\leq \ell.
		\end{cases}
	\end{equation}
	We write $\|\cdot\|$ for the standard norm in $H^1(\rn)$. Set $\cH^\ell:=H^1(\rn)^{\phi_1}\times\cdots\times H^1(\rn)^{\phi_\ell}$, \ ${\cJ^\ell_\infty:=\cJ_1^\ell:\cH^\ell\to\r}$ \ and \ $\cN^\ell_\infty:=\cN_1^\ell(\rn)$ as in the previous section, and let
	$$c^\ell_\infty:= \inf_{\bf u\in\cN^\ell_\infty}\cJ^\ell_\infty(\bf u).$$
	For each $i=1,\ldots,\ell$, consider the problem
	\begin{equation}\label{P_pi}
		\tag{$\mathscr P_i^{\phi_i}$}\qquad
		\begin{cases}
			-\Delta u+u=\mu_i|u|^{p-2}u,\\
			u \in H^1(\rn)^{\phi_i}, \quad u\neq 0.
		\end{cases}
	\end{equation}
	Its solutions are the critical points of the functional $J_i:H^1(\rn)^{\phi_i}\to\r$ given by
	$$J_i(u):=\frac{1}{2}\displaystyle\irn(|\nabla u|^2+u^2)-\frac{1}{p}\displaystyle\irn|u|^p,$$
	belonging to the Nehari manifold $\cN_i:=\{u\in H^1(\rn)^{\phi_i}:u\neq 0, \ J_i'(u)u=0\}$. We set
	$$c_i:= \inf_{u\in\cN_i}J_i(u).$$
	A solution $\bf u$ to \eqref{Sl_pi} satisfying $\cJ_\infty^\ell(\bf u)=c_\infty^\ell$ will be called a \emph{least energy solution to \eqref{Sl_pi}}. Similarly, a solution $u$ to \eqref{P_pi} satisfying $J_i(u)=c_i$ will be called a \emph{least energy solution to \eqref{P_pi}}. 
	
	The \emph{$G$-fixed-point space}
	$$\F(G):=\{x\in\rn:gx=x\text{ for all }g\in G\}$$
	plays an important role on the existence or nonexistence of a least energy solution to the system \eqref{Sl_pi}. We start with the following result.
	
	\begin{proposition}\label{prop:energy_estimates}
	The following statements hold true:
		\begin{itemize}
			\item[$(i)$] $c_\infty^\ell\geq\displaystyle\displaystyle\sum_{i=1}^\ell c_i.$
			\item[$(ii)$] If $\F(G)$ has positive dimension, then $c_\infty^\ell=\displaystyle\displaystyle\sum_{i=1}^\ell c_i$.
			\item[$(iii)$] If $\ell\geq 2$ and $c_\infty^\ell=\displaystyle\displaystyle\sum_{i=1}^\ell c_i$, then $c_\infty^\ell$ is not attained.
		\end{itemize}
	\end{proposition}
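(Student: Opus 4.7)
The plan is to treat the three parts in order.

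For $(i)$, the strategy is to project any $\bf u=(u_1,\ldots,u_\ell)\in\cN^\ell_\infty$ componentwise onto the scalar Nehari manifolds $\cN_i$. Since $\lambda_{ij}<0$, the Nehari identity \eqref{eq:nehari} yields $\|u_i\|^2\leq\irn\mu_i|u_i|^p$, so there is a unique $t_i\in(0,1]$ with $t_iu_i\in\cN_i$. Then $c_i\leq J_i(t_iu_i)=\frac{p-2}{2p}t_i^2\|u_i\|^2\leq\frac{p-2}{2p}\|u_i\|^2$, and summing over $i$ combined with \eqref{eq:J_nehari} gives $\sum_i c_i\leq\cJ^\ell_\infty(\bf u)$; taking the infimum finishes $(i)$.

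For $(ii)$, the idea is to produce near-optimal test tuples with pairwise disjoint supports by translating along a nonzero vector in $\F(G)$. The key observation is that for $\xi\in\F(G)$ and $u\in H^1(\rn)^{\phi_i}$ the translate $w(x):=u(x-t\xi)$ again lies in $H^1(\rn)^{\phi_i}$, because $g(x-t\xi)=gx-t\xi$ holds for every $g\in G$ whenever $\xi$ is $G$-fixed. Fix $\xi\in\F(G)\setminus\{0\}$ and $\eta>0$. For each $i$, I would choose $v_i\in\cC_c^\infty(\rn)\cap H^1(\rn)^{\phi_i}$ (density obtained by mollifying with a radial kernel and multiplying by a radial cutoff, both $G$-invariant, so $\phi_i$-equivariance is preserved) and $s_i>0$ with $s_iv_i\in\cN_i$ and $J_i(s_iv_i)<c_i+\eta/\ell$. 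For $T$ sufficiently large the tuple $\bf w:=(v_1(\cdot-T\xi),\ldots,v_\ell(\cdot-\ell T\xi))$ has pairwise disjoint supports, so every coupling integral $\irn|w_j|^{\alpha_{ij}}|w_i|^{\beta_{ij}}$ vanishes. Hence $\bf s\bf w\in\cN^\ell_\infty$ and $\cJ^\ell_\infty(\bf s\bf w)=\sum_iJ_i(s_iv_i)<\sum_i c_i+\eta$. Sending $\eta\downarrow 0$ and combining with $(i)$ yields equality.

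For $(iii)$, suppose for contradiction that some $\bf u\in\cN^\ell_\infty$ attains $\sum_i c_i$. Rerunning the chain of inequalities from $(i)$, equality forces $t_i=1$ (so $u_i\in\cN_i$) for every $i$. Subtracting the Nehari identity on $\cN_i$ from the one on $\cN^\ell_\infty$ gives $\sum_{j\neq i}\lambda_{ij}\beta_{ij}\irn|u_j|^{\alpha_{ij}}|u_i|^{\beta_{ij}}=0$, and since each summand is nonpositive, each term vanishes, so $u_i$ and $u_j$ have essentially disjoint supports whenever $i\neq j$. By Theorem \ref{ES} the tuple $\bf u$ solves \eqref{Sl_pi}; on account of the disjoint supports all coupling terms are pointwise zero, so each nontrivial component $u_i$ is an $H^1(\rn)$-solution of $-\Delta u_i+u_i=\mu_i|u_i|^{p-2}u_i$. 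The main obstacle, and the crux of the argument, is to rule this configuration out: by standard elliptic regularity $u_i\in C^\infty\cap L^\infty(\rn)$, so it satisfies the linear equation $-\Delta u_i+(1-\mu_i|u_i|^{p-2})u_i=0$ with bounded coefficients; since $\ell\geq 2$ there is some $j\neq i$ with $u_j\not\equiv 0$, whose continuity provides a nonempty open set $\{u_j\neq 0\}$ on which $u_i$ vanishes. Aronszajn's strong unique continuation theorem then forces $u_i\equiv 0$, contradicting $u_i\in\cN_i$, and the proof is complete.
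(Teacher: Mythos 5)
Your proof is correct and follows essentially the same route as the paper's: projecting each component onto the scalar Nehari manifold $\cN_i$ for $(i)$, decoupling by translations along a direction in $\F(G)$ for $(ii)$, and an equality analysis plus unique continuation for $(iii)$. The only cosmetic differences are that the paper does not truncate the translates in $(ii)$ but instead absorbs the $o(1)$ coupling terms via Lemma \ref{N}$(ii)$, and it organizes $(iii)$ as a two-case split rather than first forcing $t_i=1$ and deducing that all coupling integrals vanish.
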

	
	\begin{proof}
		$(i):$ Let $\bf u=(u_1,\ldots,u_\ell)\in\cN_\infty^\ell$. As $\lambda_{ij}<0$, from \eqref{eq:nehari} and Lemma \ref{N} $(i)$ we get that
		$$0< \|u_i\|^2\leq\displaystyle\irn\mu_i|u_i|^p \qquad\text{for all \ }i=1,\ldots,\ell.$$
		Hence, there exist $t_1,\ldots,t_\ell\in(0,\infty)$ such that $t_iu_i\in\cN_i$. Setting $\bf t:=(t_1,\ldots,t_\ell)$ and applying Lemma \ref{N} $(ii)$ we obtain
		$$\displaystyle\sum_{i=1}^\ell c_i\leq \displaystyle\sum_{i=1}^\ell J_i(t_iu_i)\leq\cJ^\ell_\infty(\bf t\bf u) \leq \cJ^\ell_\infty(\bf u),$$
		as claimed.
		
		$(ii):$ For each $i=1,\ldots,\ell$, let $v_i\in\cN_i$. As $\F(G)$ has positive dimension we may choose $\xi_{ik}\in\F(G)$ such that $|\xi_{ik}-\xi_{jk}|\to\infty$ as $k\to\infty$ if $i\neq j$. Define $\bf u_k=(u_{1k},\ldots,u_{\ell k})$ by $u_{ik}(x):=v_i(x-\xi_{ik})$. Note that, as $\xi_{ik}\in\F(G)$, we have that $u_{ik}\in H^1(\rn)^{\phi_i}$. Furthermore, $u_{ik}\in\cN_i$, $J_i(u_{ik})=J_i(v_i)$ and
		$$\displaystyle\irn\lambda_{ij}\beta_{ij}|u_{jk}|^{\alpha_{ij}}|u_{ik}|^{\beta_{ij}}=\displaystyle\irn\lambda_{ij}\beta_{ij}|v_j( \ \cdot \ +\xi_{ik}-\xi_{jk})|^{\alpha_{ij}}|v_i|^{\beta_{ij}}=o(1).$$
		Therefore,
		\begin{align*}
			0<\frac{2p}{p-2} c_i\leq \|u_{ik}\|^2=\displaystyle\irn\mu_i|u_{ik}|^p +\displaystyle\sum_{\substack{j=1\\j\not=i}}^\ell\displaystyle\irn\lambda_{ij}\beta_{ij}|u_{jk}|^{\alpha_{ij}}|u_{ik}|^{\beta_{ij}} + o(1).
		\end{align*}
		Applying Lemma \ref{N} $(ii)$ we see that, for $k$ large enough, there exists $\bf t_k:=(t_{1k},\ldots,t_{\ell k})\in(0,\infty)^\ell$ such that $\bf t_k\bf u_k\in\cN_\infty^\ell$, and $t_{ik}\to 1$ for all $i=1,\ldots,\ell$. Hence,
		\begin{align*}
			c_\infty^\ell &\leq\cJ_\infty^\ell(\bf t_k\bf u_k)=\displaystyle\sum_{i=1}^\ell J_i(t_{ik}u_{ik})+o(1)=\displaystyle\sum_{i=1}^\ell J_i(v_i)+o(1).
		\end{align*}
		This shows that $\displaystyle\displaystyle\sum\limits_{i=1}^\ell c_i\geq c_\infty^\ell$ and from $(i)$ we obtain $\displaystyle\displaystyle\sum\limits_{i=1}^\ell c_i=c_\infty^\ell$.
		
		$(iii):$ Let $c_\infty^\ell=\displaystyle\sum\limits_{i=1}^\ell c_i$. To show that this value is not attained, we argue by contradiction. Assume that $\bf u=(u_1,\ldots,u_\ell)\in\cN^\ell_\infty$ and $\cJ^\ell_\infty(\bf u)=c_\infty^\ell$. There are two possibilities. If for every $i$ there exists $j\neq i$ such that $\displaystyle\displaystyle\displaystyle\irn| u_j|^{\alpha_{ij}}|u_i|^{\beta_{ij}}\neq 0$, then $\displaystyle \|u_i\|^2<\displaystyle\irn\mu_i|u_i|^p$ for all $i=1,\ldots,\ell$  and, hence, there exists $t_i\in(0,1)$ such that $t_iu_i\in\cN_i$ and \[
		c_i \leq J_i(t_iu_i)=\dfrac{p-2}{2p}\|t_iu_i\|^2<\dfrac{p-2}{2p}\|u_i\|^2.
		\]
		It follows that 
		$$c_\infty^\ell=\cJ^\ell_\infty(\bf u)=\frac{p-2}{2p}\displaystyle\sum_{i=1}^\ell\|u_i\|^2>\displaystyle\sum_{i=1}^\ell c_i,$$
		contradicting our assumption. On the other hand, if there exists $i$ such that $\displaystyle\displaystyle\displaystyle\irn |u_j|^{\alpha_{ij}}|u_i|^{\beta_{ij}}= 0$ for all $j\neq i$, then 
		$$\|u_i\|^2=\displaystyle\irn\mu_i|u_i|^p\qquad\text{and}\qquad J_i(u_i)=c_i.$$
		Hence, $u_i$ is a nontrivial solution to the problem
		$$-\Delta w + w = \mu_i|w|^{p-2}w,\qquad w\in H^1(\rn).$$
		But $\displaystyle\displaystyle\irn |u_j|^{\alpha_{ij}}|u_i|^{\beta_{ij}}= 0$ also implies that $|u_j|^{\alpha_{ij}}|u_i|^{\beta_{ij}}= 0$ a.e. in $\rn$. As $u_j\neq 0$ for all $j$, we have that $u_i=0$ in some subset of positive measure of $\rn$. This contradicts the unique continuation principle.
	\end{proof}
	
	The following lemma will play an important role in the proof of Theorem \ref{thm:spliting}.
	
	\begin{lemma}\label{lem:orbits} 
		For any given sequence $(y_k)$ in $\rn$, there exist a sequence $(\zeta_k)$ in $\rn$ and a constant $C>0$ such that, up to a subsequence,
		$$\mathrm{dist}(Gy_k,\zeta_k)\leq C\qquad\text{for all \ }k\in\n,$$
		and
		\begin{itemize}
			\item[•] either $\zeta_k\in\F(G)$ for all $k\in\n$,
			\item[•] or there exist $g_n\in G$, $n\in\n$, such that $\lim\limits_{k\to\infty}|g_{n_1}\zeta_k-g_{n_2}\zeta_k|=\infty$ for any pair $n_1,n_2\in\n$ with $n_1\neq n_2$.
		\end{itemize}
	\end{lemma}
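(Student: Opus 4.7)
The plan is to decompose $\rn$ along the orthogonal splitting $\rn=\F(G)\oplus\F(G)^\perp$. Because $G$ acts by isometries and fixes $\F(G)$ pointwise, the subspace $\F(G)^\perp$ is $G$-invariant, so for each $k$ we can write $y_k=y_k^F+y_k^\perp$ with $y_k^F\in\F(G)$ and $y_k^\perp\in\F(G)^\perp$, and then $gy_k=y_k^F+gy_k^\perp$ for every $g\in G$. All the analysis reduces to the perpendicular part $y_k^\perp$, and the two alternatives in the conclusion correspond to whether the orbital part stays bounded or diverges.

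After passing to a subsequence, either $(|y_k^\perp|)$ is bounded or $|y_k^\perp|\to\infty$. In the bounded case, I would take $\zeta_k:=y_k^F\in\F(G)$; then for every $g\in G$ one has $|gy_k-\zeta_k|=|gy_k^\perp|=|y_k^\perp|$, which is uniformly bounded by some constant $C$, so $\mathrm{dist}(Gy_k,\zeta_k)\leq C$ and the first alternative holds.

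In the divergent case, I would instead set $\zeta_k:=y_k$, which lies in $Gy_k$ and so gives $\mathrm{dist}(Gy_k,\zeta_k)=0$. Normalize $\hat y_k:=y_k^\perp/|y_k^\perp|$; this sits on the unit sphere of the finite-dimensional space $\F(G)^\perp$, so along a further subsequence $\hat y_k\to\hat y$ with $|\hat y|=1$ and $\hat y\in\F(G)^\perp$. Since $\hat y\neq 0$, it does not belong to $\F(G)$, so $G\hat y\neq\{\hat y\}$, and assumption $(A_2)$ forces $G\hat y$ to be infinite. I would then pick $g_n\in G$ ($n\in\n$) such that the points $g_n\hat y$ are pairwise distinct. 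For any $n_1\neq n_2$,
\[
|g_{n_1}\zeta_k-g_{n_2}\zeta_k|=|g_{n_1}y_k^\perp-g_{n_2}y_k^\perp|=|y_k^\perp|\,|g_{n_1}\hat y_k-g_{n_2}\hat y_k|,
\]
and continuity of the $G$-action gives $|g_{n_1}\hat y_k-g_{n_2}\hat y_k|\to|g_{n_1}\hat y-g_{n_2}\hat y|>0$ as $k\to\infty$, while $|y_k^\perp|\to\infty$; multiplying yields the second alternative.

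The main obstacle is the divergent case: one must be sure that a single sequence $(g_n)$ can be chosen so that \emph{every} pair of orbit representatives eventually separates linearly in $k$. The key device is to pass to the limiting unit direction $\hat y$, use $(A_2)$ to ensure its $G$-orbit is infinite (so that infinitely many well-separated $g_n\hat y$ exist), and then exploit that pairwise positive separation on the unit sphere is amplified by the blow-up factor $|y_k^\perp|$. This reduction to the asymptotic direction is what makes the $(A_2)$ dichotomy usable; the bounded case, by contrast, requires no group-theoretic input beyond the orthogonal decomposition.
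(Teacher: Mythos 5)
Your argument is correct and complete. Note, however, that the paper does not actually prove Lemma \ref{lem:orbits}: it simply cites \cite[Lemma 3.2]{CC}, so your proposal is a self-contained substitute rather than a variant of an argument given in the text. Your route is the natural one and uses nothing beyond linear algebra and assumption $(A_2)$: since $G\subset O(N)$, the fixed-point set $\F(G)$ is a linear subspace whose orthogonal complement is $G$-invariant, the splitting $y_k=y_k^F+y_k^\perp$ identifies $|y_k^\perp|=\mathrm{dist}(y_k,\F(G))$, and the dichotomy bounded/unbounded for this quantity produces exactly the two alternatives. The only step a careful reader might pause at is the divergent case, and you handle it correctly: the limit direction $\hat y$ is a nonzero vector of $\F(G)^\perp$, hence not fixed, hence by $(A_2)$ has infinite orbit; choosing $g_n$ with the $g_n\hat y$ pairwise distinct gives, for each \emph{fixed} pair $n_1\neq n_2$, the lower bound $|g_{n_1}\zeta_k-g_{n_2}\zeta_k|=|y_k^\perp|\,|g_{n_1}\hat y_k-g_{n_2}\hat y_k|\geq \tfrac{1}{2}|y_k^\perp|\,|g_{n_1}\hat y-g_{n_2}\hat y|\to\infty$, which is all the statement requires since the limit is taken pairwise and no uniformity in $(n_1,n_2)$ is demanded. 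This is, as far as the underlying ideas go, the same mechanism as in the cited reference, but having it written out makes the role of $(A_2)$ and of the fixed-point space transparent, which is useful given how central this dichotomy is to Theorem \ref{thm:spliting}.
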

	
	\begin{proof}
		This is the statement of \cite[Lemma 3.2]{CC} for a group $G$ satisfying $(A_2)$.
	\end{proof}
	
	We are ready to prove the main result of this section.
	
	\begin{theorem} \label{thm:spliting} Let $\ell\geq2$ and $\bf u_k=(u_{1k},\ldots,u_{\ell k})\in\cN_\infty^\ell$ be such that $\cJ_\infty^\ell(\bf u_k)\to c_\infty^\ell$. 
		\begin{itemize}
			\item[$(I)$] If $\F(G)=\{0\}$, then there exists a least energy solution $\bf w=(w_1,\ldots,w_\ell)$ to the system \eqref{Sl_pi} such that, after passing to a subsequence,
			\[\lim_{k \to \infty}\|u_{ik}-w_{i}\|=0\qquad\text{for all \ }i=1,\ldots,\ell,
			\]
			and \ $c_\infty^\ell>\displaystyle\displaystyle\sum_{i=1}^\ell c_i$. Moreover, if $u_{ik}\geq 0$ for all $k\in\n$, then $w_i\geq 0$.
			\item[$(II)$] If $\F(G)$ has positive dimension, then, for each $i=1,\ldots,\ell$, there exist a sequence $(\xi_{ik})$ in $\F(G)$ and a least energy solution $v_i$ to the problem \eqref{P_pi} such that, after passing to a subsequence,
			\[\lim_{k \to \infty}|\xi_{ik}-\xi_{jk}|=\infty \text{ if }i\neq j,\qquad\lim_{k\to\infty}\|u_{ik}-v_{i}( \ \cdot \ -\xi_{ik})\|=0\quad\text{for \ }i,j=1,\ldots,\ell,\]
			and \ $c_\infty^\ell=\displaystyle\sum_{i=1}^\ell c_i$.  Moreover, if $u_{ik}\geq 0$ for all $k\in\n$, then $v_i\geq 0$.
		\end{itemize}
	\end{theorem}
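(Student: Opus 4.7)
The argument is a concentration-compactness analysis for the vector minimizing sequence $\mathbf{u}_k$, structured differently depending on the dimension of $\F(G)$. From \eqref{eq:J_nehari} and $\cJ^\ell_\infty(\mathbf{u}_k)\to c^\ell_\infty$ the sequence is bounded in $\cH^\ell$, so a subsequence yields $u_{ik}\rightharpoonup w_i$ weakly in $H^1(\rn)^{\phi_i}$ for each $i$. Lemma \ref{lem:orbits} will be applied to every translation sequence produced by concentration-compactness, and the symmetry dichotomy it provides is the key device that links the topology of $\F(G)$ to the structure of the profiles.

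For Case $(I)$ the plan is to rule out any loss of mass at infinity. Suppose on the contrary that, for some $i$, $u_{ik}\not\to w_i$ in $L^p(\rn)$. By Lions' concentration lemma (applied in $H^1(\rn)$) there is a sequence $y_{ik}$ with $|y_{ik}|\to\infty$ such that $u_{ik}(\,\cdot\,+y_{ik})\rightharpoonup\tilde w_i\neq 0$. Apply Lemma \ref{lem:orbits} to $(y_{ik})$: since $G\subset O(N)$ every element of $Gy_{ik}$ has norm $|y_{ik}|\to\infty$, so the associated $\zeta_{ik}$ also satisfies $|\zeta_{ik}|\to\infty$, which rules out $\zeta_{ik}\in\F(G)=\{0\}$. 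Hence the second alternative holds: there exist $g_1,g_2,\ldots\in G$ with $|g_{n_1}\zeta_{ik}-g_{n_2}\zeta_{ik}|\to\infty$ for $n_1\neq n_2$. The $\phi_i$-equivariance of $u_{ik}$ then produces, for each $n$, a translate $u_{ik}(\,\cdot\,+g_n\zeta_{ik})$ weakly converging (up to a subsequence and a rotation) to $\pm\tilde w_i\neq 0$ at asymptotically disjoint locations, forcing $\|u_{ik}\|_p^p\geq N\|\tilde w_i\|_p^p$ for every $N$, a contradiction. Therefore $u_{ik}\to w_i$ strongly in $L^p$. Passing to the limit in \eqref{eq:nehari}, together with the uniform lower bound of Lemma \ref{N}$(i)$, gives $w_i\neq 0$ and $\mathbf{w}\in\cN^\ell_\infty$; matching the $L^p$-limits in \eqref{eq:nehari} upgrades the convergence to strong $H^1$, and $\cJ^\ell_\infty(\mathbf{w})=c^\ell_\infty$. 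Since $c^\ell_\infty$ is attained, Proposition \ref{prop:energy_estimates}$(iii)$ yields the strict inequality $c^\ell_\infty>\sum_i c_i$. If $u_{ik}\geq 0$ for all $k$, a.e.\ convergence transfers this to $w_i$.

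For Case $(II)$ Proposition \ref{prop:energy_estimates}$(ii)$ gives $c^\ell_\infty=\sum_i c_i$, and an energy squeeze first forces asymptotic decoupling. Because $\lambda_{ij}<0$, the Nehari identity $\partial_i\cJ^\ell_\infty(\mathbf{u}_k)u_{ik}=0$ provides $t_{ik}\in(0,1]$ with $t_{ik}u_{ik}\in\cN_i$, and
\[
\textstyle\sum_{i=1}^\ell c_i\leq\sum_{i=1}^\ell J_i(t_{ik}u_{ik})\leq\frac{p-2}{2p}\sum_{i=1}^\ell\|u_{ik}\|^2=\cJ^\ell_\infty(\mathbf{u}_k)\to\sum_{i=1}^\ell c_i
\]
forces $t_{ik}\to 1$, $\|u_{ik}\|^2\to\frac{2p}{p-2}c_i$ and $\int|u_{jk}|^{\alpha_{ij}}|u_{ik}|^{\beta_{ij}}\to 0$ for all $i\neq j$. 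Thus each $u_{ik}$ is an asymptotic minimizing sequence for $J_i$ restricted to $H^1(\rn)^{\phi_i}$ at the ground-state level $c_i$. Lemma \ref{N}$(i)$ rules out $L^p$-vanishing, so concentration-compactness gives a translation $y_{ik}$ and a nontrivial weak limit $v_i$ of $u_{ik}(\,\cdot\,+y_{ik})$; $v_i$ is a least-energy solution of \eqref{P_pi} by lower semicontinuity combined with the sharp energy $\frac{2p}{p-2}c_i$. Applying Lemma \ref{lem:orbits} to $(y_{ik})$, the second alternative would place two or more translates of the same profile at asymptotically disjoint positions in $u_{ik}$, each contributing at least $\frac{2p}{p-2}c_i$ to $\|u_{ik}\|^2$ and contradicting $\|u_{ik}\|^2\to\frac{2p}{p-2}c_i$; hence $\zeta_{ik}\in\F(G)$. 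Absorbing the bounded displacement $\zeta_{ik}-g_ky_{ik}$, the group element $g_k$ and the sign $\phi_i(g_k)$ into a rotated/translated/signed version of $v_i$ (still a least-energy solution of \eqref{P_i}), set $\xi_{ik}:=\zeta_{ik}\in\F(G)$; because translation by $\xi_{ik}$ commutes with the $G$-action, the translated sequence $u_{ik}(\,\cdot\,+\xi_{ik})$ is $\phi_i$-equivariant, and so is its limit $v_i$, which is then a least energy solution to \eqref{P_pi}. Strong $H^1$-convergence follows from the matching of norms in the ground-state relation. If $|\xi_{ik}-\xi_{jk}|$ were bounded along a subsequence, then $u_{ik}(\,\cdot\,+\xi_{ik})\to v_i$ and $u_{jk}(\,\cdot\,+\xi_{ik})\to v_j(\,\cdot\,-\eta)$ strongly in $L^p$ would give $\int|u_{jk}|^{\alpha_{ij}}|u_{ik}|^{\beta_{ij}}\to\int|v_j(\,\cdot\,-\eta)|^{\alpha_{ij}}|v_i|^{\beta_{ij}}>0$ by the unique continuation principle, contradicting the vanishing cross-term. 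Hence $|\xi_{ik}-\xi_{jk}|\to\infty$, and a.e.\ convergence transfers nonnegativity of $u_{ik}$ to $v_i$.

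The main technical obstacle is the bookkeeping in the profile analysis of Case $(II)$: extracting a single, lowest-level concentration profile per component from a coupled system and then pushing its center to $\F(G)$ via Lemma \ref{lem:orbits}, while the successive replacements (bounded displacement, rotation $g_k\in G$, possible sign $\phi_i(g_k)=\pm 1$) must be absorbed into $v_i$ without disturbing either the least-energy property or the $\phi_i$-equivariance of the final limit. The sharp energy identity $\|u_{ik}\|^2\to\frac{2p}{p-2}c_i$ obtained from the squeeze above is precisely what precludes the second alternative of Lemma \ref{lem:orbits} and forces both uniqueness of the profile and the asymptotic separation of the centers.
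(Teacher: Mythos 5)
Your proposal is correct and rests on the same two pillars as the paper's proof: Lions' lemma combined with Lemma \ref{lem:orbits} to force the concentration centers into $\F(G)$ (ruling out the infinite-orbit alternative by the $G$-invariance of $|u_{ik}|$ and $L^p$-boundedness), and the energy identities of Proposition \ref{prop:energy_estimates} to close each case. The only differences are cosmetic reorderings --- you argue Case $(I)$ by contradiction rather than directly, and in Case $(II)$ you decouple the energy before running the concentration analysis, whereas the paper clusters the centers first and then uses the energy squeeze (together with Ekeland's principle, which you leave implicit when asserting that the weak limits are critical points) to show the clusters are singletons.
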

	
	\begin{proof}
		We split the proof into three steps.
		\medskip
		
		\textsc{Step 1.} \ \emph{The choice of the concentration points.}
		
		As $\bf u_k\in\cN_\infty^\ell$ and $\lambda_{ij}<0$, from Lemma \ref{N} we get a constant $c_0>0$ such that
		\begin{equation} \label{eq:lions}
			\displaystyle\irn\mu_i|u_{ik}|^p>c_0\qquad\text{for all \ }k\in\n \text{ \ and \ } i=1,\ldots,\ell.
		\end{equation}
		Invoking \cite[Theorem 3.3]{CSz} and Ekeland's variational principle \cite[Theorem 8.5]{W} we may assume without loss of generality that $(\cJ_\infty^\ell)'(\bf u_k)\to 0$ in the dual space $(\cH^\ell)'$. From \eqref{eq:lions} and Lions' lemma \cite[Lemma 1.21]{W} we see that there exists $\theta>0$ such that
		\begin{equation*}
			\limsup_{k \to \infty}\sup_{y \in \rn}\int_{B_1(y)}\mu_i|u_{ik}|^p>2\theta>0\qquad\text{for all \ }i=1,\ldots,\ell.
		\end{equation*}
		So, passing to a subsequence, we may choose $y_{ik}\in\rn$ satisfying
		\begin{equation}\label{eeMT1}
			\int_{B_1(y_{ik})}\mu_i|u_{ik}|^p > \theta \qquad\text{for all \ }k\in\n \text{ \ and \ } i=1,\ldots,\ell.
		\end{equation}
		Applying Lemma \ref{lem:orbits} to $(y_{ik})$ we obtain a sequence $(\zeta_{ik})$ and a constant $C$ such that $\mathrm{dist}(G y_{ik},\zeta_{ik})\leq C$ for all $k$ and $i$. Since $|u_{ik}|$ is $G$-invariant, taking $g_{ik}\in G$ such that $|g_{ik}y_{ik}-\zeta_{ik}|=\mathrm{dist}(Gy_{ik},\zeta_{ik})$ we get
		\begin{equation} \label{eq:nonull}
			\int_{B_{C+1}(\zeta_{ik})}\mu_i|u_{ik}|^p\geq\int_{B_1(g_{ik}y_{ik})}\mu_i|u_{ik}|^p\geq\int_{B_1(y_{ik})}\mu_i|u_{ik}|^p > \theta>0.
		\end{equation}
		We claim that $\zeta_{ik}\in\F(G)$. Otherwise, by the alternative in Lemma \ref{lem:orbits}, for every $M\in\n$ we can choose $g_{i1},\ldots,g_{iM}\in G$ such that
		$$|g_{in} \zeta_{ik} - g_{in'} \zeta_{ik}| \geq 2(C+1) \qquad\text{for all pairs \ }n\neq n', \ \ n,n' = 1,\ldots,M,$$
		and for sufficiently large $k \in \n$. As a consequence, $B_{C+1}(g_{in}\zeta_{ik})\cap B_{C+1}(g_{in'}\zeta_{ik})=\emptyset$ and, so, 
		\begin{equation*}
			\displaystyle\irn\mu_i|u_{ik}|^p\geq\displaystyle\sum_{n=1}^M\int_{B_{C+1}(g_{in}\zeta_{ik})}\mu_i|u_{ik}|^p=M\int_{B_{C+1}(\zeta_{ik})}\mu_i|u_{ik}|^p> M\theta,
		\end{equation*}
		which is a contradiction, because $M$ is arbitrary and, as $(\bf u_k)$ is bounded in $\cH^\ell$, the sequence $(u_{ik})$ is bounded in $L^p(\rn)$. This shows that $\zeta_{ik}\in\F(G)$.
		
		After passing to a subsequence and reordering the components, there are integers \[{0=:\ell_0<\ell_1<\cdots<\ell_n:=\ell}\] such that, setting $I_r:=(\ell_r,\ell_{r+1}]$ for $r=0,\ldots,n-1$, we have
		\begin{align*}
			&(|\zeta_{ik}-\zeta_{jk}|)\text{ is bounded} &&\text{if \ }i,j\in I_r\text{ for some } r, \\
			&|\zeta_{ik}-\zeta_{jk}|\to\infty &&\text{if \ }i\in I_r\text{ and }j\in I_{r'}\text{ with }r\neq r'.
		\end{align*}
		Now we replace $\zeta_{ik}$ by $\xi_{ik}:=\zeta_{\ell_{r+1}\,k}$ for every $i\in I_r$, and we fix $R>0$ large enough so that $B_{C+1}(\zeta_{ik})\subset B_R(\xi_{ik})$ for all $k\in\n$ and $i=1,\ldots,\ell$. Then these new points satisfy
		\begin{align*}
			&\xi_{ik}\in\F(G), \\
			&\xi_{ik}=\xi_{jk} \text{ for all }k &&\text{if \ }i,j\in I_r\text{ for some } r, \\
			&|\xi_{ik}-\xi_{jk}|\to\infty &&\text{if \ }i\in I_r\text{ and }j\in I_{r'}\text{ with }r\neq r',
		\end{align*}
		and from \eqref{eq:nonull} we obtain
		\begin{equation} \label{eq:nonull2}
			\int_{B_R(\xi_{ik})}\mu_i|u_{ik}|^p > \theta>0\qquad\text{for all \ }k\in\n \text{ \ and \ } i=1,\ldots,\ell.
		\end{equation}
		Note that, if $\F(G)=\{0\}$, then $\xi_{ik}=0$ for all $k$ and $i$, and there is only one interval $I_0:=(0,\ell\,]$.
		\medskip
		
		\textsc{Step 2.} \ \emph{A solution to a subsystem.}
		
		Define
		$$w_{ik}(x):=u_{ik}(x+\xi_{ik}).$$
		Observe that $w_{ik}\in H^1(\rn)^{\phi_i}$, because $\xi_{ik}\in\F(G)$. Since the sequence $(w_{ik})$ is bounded in $H^1(\rn)$, passing to a subsequence, we have that $w_{ik}\rh w_i$ weakly in $H^1(\rn)$, $w_{ik}\to w_i$ in $L^p_\mathrm{loc}(\rn)$ and  $w_{ik}\to w_i$ a.e. in $\rn$. Then, $w_i\geq 0$ if $u_{ik}\geq 0$ for all $k\in\n$. As, by \eqref{eq:nonull2},
		\begin{equation*}
			\int_{B_R(0)}\mu_i|w_{ik}|^p=\int_{B_R(\xi_{ik})}\mu_i|u_{ik}|^p > \theta>0\qquad\text{for all \ }k\in\n \text{ \ and \ } i=1,\ldots,\ell,
		\end{equation*}
		we deduce that $w_i\neq 0$ for all $i=1,\ldots,\ell$. Furthermore,
		\begin{align} \label{eq:upperbd}
			\frac{p-2}{2p}\sum_{i=1}^\ell\|w_i\|^2&\leq\lim_{k\to\infty}\frac{p-2}{2p}\sum_{i=1}^\ell\|w_{ik}\|^2=\lim_{k\to\infty}\frac{p-2}{2p}\sum_{i=1}^\ell\|u_{ik}\|^2=\lim_{k\to\infty}\cJ_\infty^\ell(\bf u_k)=c_\infty^\ell.
		\end{align}
		
Fix $r\in\{0,\ldots,n-1\}$, take $\xi_{ik}$ with $i\in I_r$ and define $\bf u_k^r=(u^r_{1k},\ldots,u^r_{\ell k})$ by 
		\begin{equation*}
			u^r_{jk}(x):=u_{jk}(x+\xi_{ik})=
			\begin{cases}
				w_{jk}(x) &\text{if \ }j\in I_r,\\
				w_{jk}(x+\xi_{ik}-\xi_{jk}) &\text{if \ }j\not\in I_r.
			\end{cases}
		\end{equation*}
After passing to a subsequence, we have that $u^r_{jk}\rh u^r_j$ weakly in $H^1(\rn)$. Note that $u_j^r=w_j\neq 0$ if $j\in I_r$. Set $\bf u^r:=(u_1^r,\ldots,u_\ell^r)$. Let $\vp\in\cC^\infty_c(\rn)$ and define $\widehat{\vp}_{k}(x):=\vp(x-\xi_{ik})$. Then we have that $\partial_j\cJ^\ell_\infty(\bf u_k)\widehat{\vp}_{k}=\partial_j\cJ^\ell_\infty(\bf u_k^r)\vp$ and, as $(\cJ_\infty^\ell)'(\bf u_k)\to 0$, we deduce that
\begin{align*}
0=\lim_{k\to\infty}\partial_j\cJ^\ell_\infty(\bf u_k)\widehat{\vp}_{k}=\lim_{k\to\infty}\partial_j\cJ^\ell_\infty(\bf u_k^r)\varphi=\partial_j\cJ^\ell_\infty(\bf u^r)\varphi\qquad\text{for all \ }j=1,\ldots,\ell.
\end{align*}
This shows that $\bf u^r$ is a critical point of $\cJ_\infty^\ell:\cH^\ell\to\r$. Hence, its nontrivial components give a fully nontrivial solution to the subsystem
		\begin{equation} \label{eq:subsystem}
			\begin{cases}
				-\Delta v_m+v_m=\mu_m|v_m|^{p-2}v_m + \sum\limits_{\substack{j\in \widehat{I}_r \\ j \not=m}}\lambda_{mj}\beta_{mj}|v_j|^{\alpha_{mj}}|v_m|^{\beta_{mj} -2}v_m,\\
				v_m \in H^1(\rn)^{\phi_m}, \qquad v_m\neq 0, \qquad m\in \widehat{I}_r,
			\end{cases}
		\end{equation}
where $\widehat{I}_r:=\{m:1\leq m\leq\ell, \ u^r_m\neq 0\}$. Note that $\widehat{I}_r\supset I_r$.
\medskip
		
		\textsc{Step 3.} \ \emph{The conclusion.}
		
		Now we distinguish two cases.
		
		$(I):$ Let $\F(G)=\{0\}$. Then $\xi_{ik}=0$ and $w_{ik}=u_{ik}$ for all $k$, $\widehat{I}_0=\{1,\ldots,\ell\}$ and ${\bf w:=(w_1,\ldots,w_\ell)}$ is a fully nontrivial solution to the system \eqref{eq:subsystem} with $r=0$. From \eqref{eq:upperbd} we derive
\begin{align*} 
c_\infty^\ell\leq\cJ_\infty^\ell(\bf w)=\frac{p-2}{2p}\displaystyle\sum_{i=1}^\ell\|w_i\|^2\leq \lim_{k\to\infty}\frac{p-2}{2p}\displaystyle\sum_{i=1}^\ell\|u_{ik}\|^2=c_\infty^\ell.
\end{align*}
As $u_{ik}\rh w_i$ weakly in $H^1(\rn)$, this implies that $u_{ik}\to w_i$ strongly in $H^1(\rn)$ and that 
		$\cJ_\infty^\ell(\bf w)=c_\infty^\ell$. Then, by Proposition \ref{prop:energy_estimates}$(iii)$, we have that $c_\infty^\ell>\displaystyle\sum_{i=1}^\ell c_i$, as claimed.
		
		$(II):$ Let $\F(G)$ have positive dimension. Since for each $r\in\{0,\ldots,n-1\}$, the nontrivial components of $\bf u^r$ satisfy \eqref{eq:subsystem}, multiplying the $i$-th equation by $w_i$ we get
\begin{equation} \label{eq:w_i}
\|w_i\|^2=\irn\mu_i|w_i|^p+\sum\limits_{\substack{j\in \widehat{I}_r \\ j \not=i}}\irn\lambda_{ij}\beta_{ij}|u^r_j|^{\alpha_{ij}}|w_i|^{\beta_{ij}}\leq \irn\mu_i|w_i|^p.
\end{equation}
Hence, there exists $t_i\in (0,1]$ such that $t_iw_i\in\cN_i$ and, so,
$$c_i\leq\frac{p-2}{2p}\|t_iw_i\|^2\leq\frac{p-2}{2p}\|w_i\|^2\qquad\text{for every \ }i=1,\ldots,\ell.$$
From \eqref{eq:upperbd} and Proposition \ref{prop:energy_estimates} we derive
\begin{equation} \label{eq:upperbd2}
\sum_{i=1}^\ell c_i\leq\frac{p-2}{2p}\sum_{i=1}^\ell\|w_i\|^2\leq\lim_{k\to\infty}\frac{p-2}{2p}\sum_{i=1}^\ell\|w_{ik}\|^2=c_\infty^\ell=\sum_{i=1}^\ell c_i.
\end{equation}
It follows that $\frac{p-2}{2p}\|w_i\|^2=c_i$ and  $t_i=1$, so $w_i$ is a nontrivial least energy solution to the problem \eqref{P_pi} for each $i=1,\ldots,\ell$. Furthermore, \eqref{eq:w_i} yields
$$\irn\lambda_{ij}\beta_{ij}|u^r_j|^{\alpha_{ij}}|w_i|^{\beta_{ij}}=0\qquad\text{for every \ }j\neq i.$$
Therefore, $u^r_j=0$ for every $j\neq i$ and, so, $\widehat{I}_r=I_r=\{i\}$. This implies that $|\xi_{ik}-\xi_{jk}|\to\infty$ for all $j\neq i$. Finally, as $w_{ik}\rh w_i$ weakly in $H^1(\rn)$, it follows from \eqref{eq:upperbd2} that $w_{ik}\to w_i$ strongly in $H^1(\rn)$. This implies that $\displaystyle\lim_{k\to\infty}\|u_{ik}-w_{i}( \ \cdot \ -\xi_{ik})\|=0$, and the proof is complete.
	\end{proof}
	
	From Theorem \ref{thm:spliting} and Proposition \ref{prop:energy_estimates}$(ii)$-$(iii)$ we immediately obtain the following result.
	
	\begin{theorem} \label{thm:existence_rn}
		Let $\ell\geq 2$. The system \eqref{Sl_pi} has a least energy solution if and only if $\F(G)=\{0\}$.
	\end{theorem}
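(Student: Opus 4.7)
The plan is to derive this theorem as a direct corollary of the results already established in this section, specifically Theorem \ref{thm:spliting} together with parts $(ii)$ and $(iii)$ of Proposition \ref{prop:energy_estimates}. Before beginning, I would note that $\F(G)$ is a linear subspace of $\rn$ (being the fixed-point set of a linear action), so the alternative ``$\F(G)=\{0\}$'' versus ``$\F(G)$ has positive dimension'' is exhaustive and mutually exclusive.

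For the ``if'' direction, I would assume $\F(G)=\{0\}$. Lemma \ref{N}$(i)$ guarantees that $\cN_\infty^\ell$ is nonempty with $c_\infty^\ell>0$, so I can pick a minimizing sequence $\bf u_k\in\cN_\infty^\ell$ satisfying $\cJ_\infty^\ell(\bf u_k)\to c_\infty^\ell$. I would then invoke Theorem \ref{thm:spliting}$(I)$ to extract a subsequence converging strongly in $\cH^\ell$ to a least energy solution $\bf w$ of \eqref{Sl_pi}, which is precisely the conclusion desired.

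For the ``only if'' direction, I would argue by contrapositive: assume $\F(G)\neq\{0\}$, so that $\F(G)$ has positive dimension. Proposition \ref{prop:energy_estimates}$(ii)$ then yields $c_\infty^\ell=\sum_{i=1}^\ell c_i$, and since $\ell\geq 2$, Proposition \ref{prop:energy_estimates}$(iii)$ forces this value not to be attained, meaning \eqref{Sl_pi} admits no least energy solution.

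There is no real obstacle here, as all the substantial work has already been done: the nontrivial content lives in the concentration-compactness analysis inside the proof of Theorem \ref{thm:spliting} and in the energy comparisons of Proposition \ref{prop:energy_estimates}. The statement of Theorem \ref{thm:existence_rn} is essentially the logical packaging of those two results into a clean existence/nonexistence dichotomy governed by the size of $\F(G)$.
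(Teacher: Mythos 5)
Your proposal is correct and follows exactly the paper's own route: the paper derives Theorem \ref{thm:existence_rn} immediately from Theorem \ref{thm:spliting}$(I)$ (existence when $\F(G)=\{0\}$) together with Proposition \ref{prop:energy_estimates}$(ii)$--$(iii)$ (nonattainment when $\F(G)$ has positive dimension). Nothing is missing.
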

	
	Theorem \ref{thm:main_rn} follows from the previous one.
	
	\begin{proof}[Proof of Theorem \ref{thm:main_rn}] 
		The group $G:=\Gamma\times O(N-4)$ introduced in Example \ref{examples}$(iii)$ satisfies $(A_2)$ if either $N=4$ or $N\geq 6$, and $\F(G)=\{0\}$. For $i=1,\ldots,m$ we set $\phi_i\equiv 1$, and for $i=m+1,\ldots,\ell$ we take $\phi_i$ as in Example \ref{examples}$(iii)$. Applying Theorem \ref{thm:existence_rn} we obtain a least energy solution $\bf w=(w_1,\ldots,w_\ell)$ to the system \eqref{Sl_i} satisfying \eqref{eq:symmetries}. By Remark \ref{rem:positive}, we may take $w_1,\ldots,w_m$ to be positive. The symmetries ensure that the last $\ell-m$ components are nonradial and change sign.
	\end{proof}
	
	To further emphasize the role of the fixed points, we prove the following result.
	
	\begin{theorem} \label{thm:multiplicity}
		If $\F(G)=\{0\}$, then the system \eqref{Sl_pi} has infinitely many solutions.
	\end{theorem}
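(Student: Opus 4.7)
My plan is to combine the compactness provided by $\F(G)=\{0\}$ with an equivariant Lusternik--Schnirelmann argument on the Nehari-type set $\cN_\infty^\ell$, using the natural $\z_2$-symmetry $\bf u\mapsto-\bf u$ of $\cJ_\infty^\ell$. Note that this action is free on $\cN_\infty^\ell$ (since every component is nonzero there) and preserves both the functional and the constraint, so the Krasnoselski genus $\gamma$ is well-defined on closed symmetric subsets of $\cN_\infty^\ell$. Moreover, by \eqref{eq:J_nehari}, minimizing $\cJ_\infty^\ell$ on such subsets is the same as minimizing the norm $\|\bf u\|_\ell^2$, which is bounded below by Lemma \ref{N}$(i)$.

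The first step is to upgrade Theorem \ref{thm:spliting}$(I)$ from minimizing sequences to arbitrary bounded Palais--Smale sequences for $\cJ_\infty^\ell|_{\cN_\infty^\ell}$, thereby obtaining $(\mathrm{PS})_c$ at every level $c>0$. The argument in Steps~1--3 of the proof of Theorem \ref{thm:spliting} applies with almost no change: Lions' lemma and Lemma \ref{lem:orbits} yield concentration centers that can be chosen in $\F(G)=\{0\}$, so the translation parameters vanish, and the same strong-convergence argument as in Step~3$(I)$ gives $\bf u_k\to\bf u$ in $\cH^\ell$ (with $\bf u\in\cN_\infty^\ell$) whenever the weak limit has all components nontrivial; vanishing components converge strongly to zero, which must be excluded on the Nehari set by Lemma \ref{N}$(i)$.

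The second step is the standard equivariant min-max construction. Set
$$c_n:=\inf\bigl\{\max_{\bf u\in A}\cJ_\infty^\ell(\bf u):A\subset\cN_\infty^\ell\text{ closed, symmetric, }\gamma(A)\geq n\bigr\}.$$
Because each $H^1(\rn)^{\phi_i}$ is infinite-dimensional by $(A_1)$, I can select $n$-dimensional symmetric subspaces of $\cH^\ell$ consisting of $\ell$-tuples with pairwise disjoint supports (so the coupling terms vanish and \eqref{cft} is automatic), and then push their unit spheres into $\cN_\infty^\ell$ via the Nehari parametrization $\bf u\mapsto\bf t_{\bf u}\bf u$ of Lemma \ref{N}$(ii)$; this map is $\z_2$-equivariant and homeomorphic onto its image, so $c_n<\infty$ for every $n$. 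With $(\mathrm{PS})_c$ in hand, the standard deformation lemma and the monotonicity-and-multiplicity properties of the genus then imply that either $c_n\to\infty$, giving infinitely many distinct critical values, or some $c_{n_0}$ is attained on a set of infinite genus, giving infinitely many critical points at the same level; in either case, Theorem \ref{ES} turns them into infinitely many geometrically distinct solutions of \eqref{Sl_pi}.

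The main obstacle is setting up a $\z_2$-equivariant pseudo-gradient flow that preserves $\cN_\infty^\ell$, since the coupling coefficients $\lambda_{ij}<0$ prevent a direct application of the Nehari-manifold machinery used for cooperative or uncoupled systems. This is precisely the framework developed in \cite{CSz} and already invoked in Lemma \ref{N}$(ii)$ and Theorem \ref{ES}: the Nehari parametrization provides an equivariant homeomorphism between a suitable open symmetric subset of $\cH^\ell$ and $\cN_\infty^\ell$, so the flow can be constructed on the ambient space and transferred to the Nehari set, making the classical Lusternik--Schnirelmann argument applicable.
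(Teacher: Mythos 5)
Your proposal is correct and follows essentially the same route as the paper: the compactness coming from $\F(G)=\{0\}$ (which the paper packages as the compact embedding of Lemma \ref{lem:compactness}, while you re-derive it from the splitting/concentration argument) yields the Palais--Smale condition at every level, and the multiplicity then follows from the genus-based Lusternik--Schnirelmann scheme of \cite{CSz} applied through the Nehari parametrization $\cU\to\cN_\infty^\ell$, exactly as in the paper's proof. The only point to tighten is your construction of symmetric sets of genus $n$: the unit sphere of an $n$-dimensional subspace of $\cH^\ell$ contains tuples with a vanishing component (so \eqref{cft} can fail there), and one should instead map a single sphere $S^{n-1}$ oddly into each factor $H^1(\rn)^{\phi_i}$ with supports disjoint across components, as in \cite[Lemma 4.5]{CSz}.
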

	
	The key ingredient is the following fact.
	
	\begin{lemma} \label{lem:compactness}
		If $G$ satisfies $(A_2)$ and $\F(G)=\{0\}$, then the embedding $H^1(\rn)^\phi\hookrightarrow L^p(\rn)$ is compact for any homomorphism $\phi:G\to\z_2$ and $p\in(2,2^*)$.
	\end{lemma}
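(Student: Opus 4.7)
The plan is to argue by contradiction using Lions' concentration--compactness lemma together with the orbit dichotomy provided by Lemma \ref{lem:orbits}. First I would reduce the claim to the following: every sequence $u_k\rh 0$ weakly in $H^1(\rn)^\phi$ satisfies $u_k\to 0$ strongly in $L^p(\rn)$. Suppose, for contradiction, that $u_k\rh 0$ in $H^1(\rn)^\phi$ but $u_k\not\to 0$ in $L^p(\rn)$. Since $(u_k)$ is bounded in $H^1(\rn)$, Lions' lemma (\cite[Lemma 1.21]{W}) yields $\theta>0$ and $y_k\in\rn$ such that, after passing to a subsequence,
\[
\int_{B_1(y_k)}|u_k|^2\geq\theta>0\qquad\text{for all \ }k\in\n.
\]

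Next I would exploit the $\phi$-equivariance: since $\phi(g)\in\{-1,1\}$, the function $|u_k|$ is $G$-invariant, so $\int_{B_1(gy_k)}|u_k|^2\geq\theta$ for every $g\in G$. Applying Lemma \ref{lem:orbits} to the sequence $(y_k)$, I would obtain a sequence $(\zeta_k)$ in $\rn$ and a constant $C>0$ with $\mathrm{dist}(Gy_k,\zeta_k)\leq C$, and by choosing $g_k\in G$ so that $|g_ky_k-\zeta_k|\leq C$, I get
\[
\int_{B_{C+1}(\zeta_k)}|u_k|^2\geq \int_{B_1(g_ky_k)}|u_k|^2\geq\theta>0.
\]

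Now I would use the dichotomy of Lemma \ref{lem:orbits}. In the first alternative, $\zeta_k\in\F(G)=\{0\}$ for all $k$, so $\zeta_k=0$ and the balls $B_{C+1}(\zeta_k)$ coincide with the fixed ball $B_{C+1}(0)$. Rellich's theorem gives $u_k\to 0$ in $L^2(B_{C+1}(0))$, contradicting the lower bound $\theta$. In the second alternative, one can pick $g_1,\ldots,g_M\in G$ with $|g_{n_1}\zeta_k-g_{n_2}\zeta_k|\to\infty$ for $n_1\neq n_2$; hence for $k$ sufficiently large the balls $B_{C+1}(g_n\zeta_k)$, $n=1,\ldots,M$, are pairwise disjoint, and $G$-invariance of $|u_k|^2$ yields
\[
\|u_k\|_{L^2(\rn)}^2\geq\sum_{n=1}^M\int_{B_{C+1}(g_n\zeta_k)}|u_k|^2\geq M\theta.
\]
Since $M$ is arbitrary and $(u_k)$ is bounded in $L^2(\rn)$, this is a contradiction.

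Thus $u_k\to 0$ in $L^p(\rn)$ in both cases, proving compactness. The only mildly delicate point is interpreting Lemma \ref{lem:orbits} in the first alternative under the hypothesis $\F(G)=\{0\}$, where it forces $\zeta_k=0$ and thus $y_k$ bounded; the remainder is a direct combination of Lions' lemma, Rellich's theorem, and orbit separation, so I expect no substantive obstacle beyond keeping track of the $G$-action on $|u_k|$.
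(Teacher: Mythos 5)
Your argument is correct, but it follows a genuinely different route from the paper. The paper disposes of the lemma in two lines: setting $K:=\ker\phi$, it observes that $H^1(\rn)^\phi\subset H^1(\rn)^K$ and that, under $(A_2)$ and $\F(G)=\{0\}$, the $K$-orbit of every $x\neq 0$ is infinite (the $G$-orbit is infinite and splits into at most two $K$-orbits of equal cardinality, $K$ being normal), whence compactness follows from the known result \cite[Lemma 4.3]{CSz} on $K$-invariant functions. You instead give a self-contained proof: reduce to showing that weakly null sequences in $H^1(\rn)^\phi$ are $L^p$-null, then combine Lions' lemma with the orbit dichotomy of Lemma \ref{lem:orbits}, using $G$-invariance of $|u_k|$ to translate the concentration ball to $B_{C+1}(\zeta_k)$ and ruling out both alternatives (Rellich when $\zeta_k=0$, orbit separation when the $G$-orbit of $\zeta_k$ spreads out). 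All the steps check out, including the normalization $\int_{B_{C+1}(g_n\zeta_k)}|u_k|^2=\int_{B_{C+1}(\zeta_k)}|u_k|^2$ and the reduction to weakly null sequences (the subspace $H^1(\rn)^\phi$ is weakly closed, so the weak limit stays in it). What the paper's proof buys is brevity by outsourcing the concentration--compactness work; what yours buys is transparency and independence from the cited lemma --- indeed your argument is essentially the same mechanism the authors deploy in Step 1 of the proof of Theorem \ref{thm:spliting}, so it fits the paper's toolkit exactly.
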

	\begin{proof}
		Let $K:=\ker\phi$. Since the $K$-orbit of every $x\in\rn\smallsetminus\{0\}$ is infinite, the embedding $H^1(\rn)^K\hookrightarrow L^p(\rn)$ is compact, where $H^1(\rn)^K$ denotes the space of $K$-invariant functions in $H^1(\rn)$; see, e.g., \cite[Lemma 4.3]{CSz}. Since $H^1(\rn)^\phi\subset H^1(\rn)^K$, our claim follows.
	\end{proof}
	
	\begin{proof}[Proof of Theorem \ref{thm:multiplicity}]
		We note that the results in \cite[Section 3]{CSz} hold true if we take $\cH:=\cH^\ell$. A standard argument using Lemma \ref{lem:compactness} shows that $\cJ_\infty^\ell:\cH^\ell\to\r$ satisfies the Palais-Smale condition at every level $c\in\r$. As in \cite[Section 3]{CSz} we set
		$$\cU:=\{\bf u\in\cH^\ell:\|u_i\|=1\text{ \ for \ }i=1,\ldots,\ell,\text{ \ and there exists \ }\bf t\in(0,\infty)^\ell\text{ \ such that \ }\bf t\bf u\in\cN_\infty^\ell\},$$
		and we define $\Psi:\cU\to\r$ by $\Psi(u):=\cJ_\infty^\ell(\bf t\bf u)$ for $\bf t\in(0,\infty)^\ell$ such that $\bf t\bf u\in\cN_\infty^\ell$. Such $\bf t$ is unique. 
		
		It follows from \cite[Theorem 3.3]{CSz} that $\Psi\in\cC^1(\cU,\r)$ and it satisfies the Palais-Smale condition at every level $c\in\r$. Exactly the same argument used to prove \cite[Lemma 4.5]{CSz} shows that $\mathrm{genus}(\cU)=\infty$ (one simply replaces $H_0^1(\o)^G$ with $H^1(\rn)^{\phi_i}$). Now \cite[Theorem 3.4]{CSz} asserts that \eqref{Sl_pi} has an unbounded sequence of solutions.
	\end{proof}
	
	We observe that, while the last $\ell-m$ components of the solutions given by the Theorem \ref{thm:multiplicity} change sign by construction, this theorem does not give any information about the sign of the first $m$ components.

	\section{The asymptotic behavior of minimizers}	
	\label{sec:profiles}	
	
	As before, we fix a closed subgroup $G$ of $O(N)$ satisfying $(A_2)$ and, for each $i=1,\ldots,\ell$, a continuous homomorphism $\phi_i:G\to\z_2$ satisfying $(A_1)$. Furthermore, we assume throughout that $\o$ is a $G$-invariant bounded domain in $\rn$ and that $0\in\o$. 
	
	Our aim is to prove the following result.
	
	\begin{theorem} \label{thm:profiles}
		For any given sequence $(\eps_k)$ of positive numbers converging to zero, there exists a least energy solution $\bf u_k=(u_{1k},\ldots,u_{\ell k})$ to the system $(\mathscr S^\phi_{\eps_k,\o})$ with the following limit profile:
		\begin{itemize}
			\item[$(I)$] If $\F(G)=\{0\}$, then there exists a least energy solution $\bf w=(w_1,\ldots,w_\ell)$ to the system \eqref{Sl_pi} such that, after passing to a subsequence,
			\[\lim_{k \to \infty}\|u_{ik}-w_{i}(\eps_k^{-1} \ \cdot \ )\|_{\eps_k}=0\qquad\text{for all \ }i=1,\ldots,\ell,
			\]
			and \ $\displaystyle\lim_{\eps \to 0}c_\eps^\ell(\o) =c_\infty^\ell$. Moreover, $w_i\geq 0$ if $u_{ik}\geq 0$ for all $k\in\n$.
			\item[$(II)$] If $\F(G)$ has positive dimension, then, for each $i=1,\ldots,\ell$, there exist a sequence $(\xi_{ik})$ in $\o\cap\F(G)$ and a least energy solution $v_i$ to the problem \eqref{P_pi} such that, after passing to a subsequence,
			\[\lim_{k \to \infty}\eps_k^{-1} \mathrm{dist}(\zeta_{ik},\partial\o) = \infty,\quad\lim_{k \to \infty}\eps_k^{-1}|\zeta_{ik}-\zeta_{jk}|=\infty\text{ if }i\neq j,\qquad\lim_{k\to\infty}\|u_{ik}-v_{i}(\eps_k^{-1}( \ \cdot \ -\zeta_{ik}))\|_{\eps_k}=0,\]
			for $i,j=1,\ldots,\ell$, \ and \ $\displaystyle\lim_{\eps \to 0}c_\eps^\ell(\o) =\displaystyle\sum_{i=1}^\ell c_i$. Moreover, $v_i\geq 0$ if $u_{ik}\geq 0$ for all $k\in\n$.
		\end{itemize}
	\end{theorem}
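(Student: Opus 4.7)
The overall strategy is to pull back the problem $(\mathscr S^\phi_{\eps_k,\o})$ to the limit setting via the rescaling $\tilde u_i(x):=u_i(\eps_k x)$, show that the rescaled minimizers form a minimizing sequence for $c_\infty^\ell$, and then invoke Theorem \ref{thm:spliting}. By Theorem \ref{thm:minimizers} there exists $\bf u_k\in\cN^\ell_{\eps_k}(\o)$ with $\cJ^\ell_{\eps_k}(\bf u_k)=c_{\eps_k}^\ell(\o)$, and Remark \ref{rem:positive} lets us choose nonnegative components at the trivial indices. Setting $\tilde u_{ik}(x):=u_{ik}(\eps_k x)$ and extending by zero outside $\o_{\eps_k}:=\eps_k^{-1}\o$ (which preserves $\phi_i$-equivariance since $\o_{\eps_k}$ is $G$-invariant), Remark \ref{rem:norms} gives $\tilde{\bf u}_k\in\cN^\ell_\infty$ and $\cJ^\ell_\infty(\tilde{\bf u}_k)=c_{\eps_k}^\ell(\o)\geq c_\infty^\ell$.

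The matching upper bound is produced by explicit test configurations combined with Lemma \ref{N}$(ii)$. For any $\delta>0$ take $\bf v\in\cN^\ell_\infty$ with $\cJ^\ell_\infty(\bf v)<c_\infty^\ell+\delta$, multiply each $v_i$ by a radial (hence $G$-invariant) cutoff $\chi_R$ to get $\bf v^R$ with compact support satisfying $\bf v^R\to\bf v$ in $\cH^\ell$ as $R\to\infty$. Since $0\in\o$, $\bf v^R(\eps_k^{-1}\cdot)\in\cH^\ell(\o)$ for $\eps_k$ small, and Lemma \ref{N}$(ii)$ produces $\bf t_k\to\bf 1$ projecting it into $\cN^\ell_{\eps_k}(\o)$, whence $\limsup_k c_{\eps_k}^\ell(\o)\leq c_\infty^\ell+\delta$, and then $\leq c_\infty^\ell$. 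In case $(II)$ a sharper choice is available: take $v_i$ a least energy solution of $(\mathscr P_i^{\phi_i})$, and pick $\zeta_{ik}\in\o\cap\F(G)$ with $\eps_k^{-1}\mathrm{dist}(\zeta_{ik},\partial\o)\to\infty$ and $\eps_k^{-1}|\zeta_{ik}-\zeta_{jk}|\to\infty$ for $i\neq j$ (possible because $\F(G)$ is a positive-dimensional linear subspace through the origin); the translated truncations $v_i^R(\eps_k^{-1}(\,\cdot\,-\zeta_{ik}))$ lie in $H^1_0(\o)^{\phi_i}$ (as $\zeta_{ik}\in\F(G)$), their cross interactions vanish in the limit, and Lemma \ref{N}$(ii)$ applies, giving $\limsup c_{\eps_k}^\ell(\o)\leq\sum_{i=1}^\ell c_i=c_\infty^\ell$ by Proposition \ref{prop:energy_estimates}$(ii)$.

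With matching energies, $\tilde{\bf u}_k$ is minimizing for $c_\infty^\ell$, and Theorem \ref{thm:spliting} applies. In case $(I)$ it yields a least energy solution $\bf w$ of $(\mathscr S_{\infty,\ell}^\phi)$ with $\tilde u_{ik}\to w_i$ strongly in $H^1(\rn)$, which by Remark \ref{rem:norms} is precisely $\|u_{ik}-w_i(\eps_k^{-1}\,\cdot\,)\|_{\eps_k}\to 0$. In case $(II)$ it produces $\tilde\xi_{ik}\in\F(G)$ with $|\tilde\xi_{ik}-\tilde\xi_{jk}|\to\infty$ and least energy solutions $v_i$ of $(\mathscr P_i^{\phi_i})$ with $\tilde u_{ik}(\,\cdot\,+\tilde\xi_{ik})\to v_i$ strongly in $H^1(\rn)$; setting $\zeta_{ik}:=\eps_k\tilde\xi_{ik}\in\F(G)$ rewrites this as $\|u_{ik}-v_i(\eps_k^{-1}(\,\cdot\,-\zeta_{ik}))\|_{\eps_k}\to 0$ together with $\eps_k^{-1}|\zeta_{ik}-\zeta_{jk}|\to\infty$. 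The sign preservation in the limit follows from strong $H^1$-convergence.

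The main obstacle is to verify in case $(II)$ that $\eps_k^{-1}\mathrm{dist}(\zeta_{ik},\partial\o)\to\infty$, since Theorem \ref{thm:spliting} only constrains the centers to lie in $\F(G)$. Suppose, along a subsequence, $\mathrm{dist}(\tilde\xi_{ik},\partial\o_{\eps_k})\leq M<\infty$; pick $y_k\in\partial\o_{\eps_k}$ closest to $\tilde\xi_{ik}$ and, along a further subsequence, assume $y_k-\tilde\xi_{ik}\to\eta\in\overline{B_M(0)}$. The smoothness of $\partial\o$ yields an exterior ball at $\eps_k y_k$ whose rescaled radius $\eps_k^{-1}r_0$ diverges, so there is a fixed open ball $B\subset\rn\setminus(\o_{\eps_k}-\tilde\xi_{ik})$ near $\eta$ for all large $k$. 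Since $\tilde u_{ik}(\,\cdot\,+\tilde\xi_{ik})\equiv 0$ on $B$ and converges strongly to $v_i$ in $L^p_{\mathrm{loc}}(\rn)$, we obtain $v_i\equiv 0$ on $B$. The unique continuation principle applied to the equation $(\mathscr P_i^{\phi_i})$ then forces $v_i\equiv 0$, contradicting $J_i(v_i)=c_i>0$; hence $\eps_k^{-1}\mathrm{dist}(\zeta_{ik},\partial\o)\to\infty$ and, in particular, $\zeta_{ik}\in\o$ for $k$ large.
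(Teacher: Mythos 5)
Your proposal is correct and follows essentially the same route as the paper: rescale the minimizers, prove $\lim_{\eps\to 0}c_\eps^\ell(\o)=c_\infty^\ell$ by the same cut-off/Nehari-projection comparison (the paper's Lemma \ref{lem:limit_energy}), and then invoke Theorem \ref{thm:spliting} in each case. Your exterior-ball/unique-continuation argument for $\eps_k^{-1}\mathrm{dist}(\zeta_{ik},\partial\o)\to\infty$ is exactly the ``standard argument'' the paper only sketches, so you have in fact supplied slightly more detail than the original at that step.
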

	
	We start with the proof of the following statement.
	
	\begin{lemma}\label{lem:limit_energy} 
		$\displaystyle\lim_{\eps \to 0}c_\eps^\ell(\o) = c_\infty^\ell$.
	\end{lemma}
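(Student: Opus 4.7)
The plan is to establish the two inequalities $c_\eps^\ell(\o)\geq c_\infty^\ell$ for every $\eps>0$ and $\limsup_{\eps\to 0}c_\eps^\ell(\o)\leq c_\infty^\ell$ separately. The lower bound is essentially free from the scaling identities in Remark \ref{rem:norms}: given $\bf u\in\cN_\eps^\ell(\o)$, extend each $u_i$ by zero to $\rn$ and set $\tilde u_i(y):=u_i(\eps y)$. Since $G$ acts linearly on $\rn$, $\phi_i$-equivariance is preserved by dilation, so $\tilde{\bf u}\in\cH^\ell$. The formulas in Remark \ref{rem:norms} turn the Nehari identity \eqref{eq:nehari} for $\bf u$ with respect to $\cJ_\eps^\ell$ into the same identity for $\tilde{\bf u}$ with respect to $\cJ_\infty^\ell$, and $\cJ_\infty^\ell(\tilde{\bf u})=\cJ_\eps^\ell(\bf u)$. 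Hence $\tilde{\bf u}\in\cN_\infty^\ell$, and taking the infimum over $\bf u$ yields $c_\eps^\ell(\o)\geq c_\infty^\ell$ for every $\eps>0$.

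For the upper bound the construction splits according to the dichotomy in Theorem \ref{thm:existence_rn}. If $\F(G)=\{0\}$, I would pick a least energy solution $\bf w=(w_1,\ldots,w_\ell)$ to \eqref{Sl_pi}, which exists by Theorem \ref{thm:existence_rn}, together with a radial cutoff $\eta\in\cC_c^\infty(\o)$ with $\eta\equiv 1$ on a neighborhood of $0$. Setting $w_{i,\eps}(x):=\eta(x)w_i(x/\eps)$, radiality of $\eta$ makes it $G$-invariant, so $w_{i,\eps}\in H_0^1(\o)^{\phi_i}$. The change of variables $y=x/\eps$ together with $\eta(\eps y)\to 1$ pointwise shows that $\|w_{i,\eps}\|_\eps^2\to\|w_i\|^2$ and that each nonlinear integral in \eqref{dJ} converges to its counterpart for $\bf w$. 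Lemma \ref{N}$(ii)$ then furnishes a unique $\bf t_\eps\in(0,\infty)^\ell$ with $\bf t_\eps\bf w_\eps\in\cN_\eps^\ell(\o)$, and these convergences force $\bf t_\eps\to(1,\ldots,1)$, whence $c_\eps^\ell(\o)\leq\cJ_\eps^\ell(\bf t_\eps\bf w_\eps)\to\cJ_\infty^\ell(\bf w)=c_\infty^\ell$.

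If instead $\F(G)$ has positive dimension, then Proposition \ref{prop:energy_estimates} gives $c_\infty^\ell=\sum_{i=1}^\ell c_i$ (not attained), so the test functions must be assembled from least energy solutions $v_i$ to the decoupled problems \eqref{P_pi} centered at distinct points of $\F(G)$. Since $0\in\o$ and $\F(G)$ is a nontrivial linear subspace of $\rn$, I would choose distinct $\zeta_1,\ldots,\zeta_\ell\in\o\cap\F(G)$, a radial cutoff $\eta$ supported in a small ball about $0$, and set $v_{i,\eps}(x):=\eta(x-\zeta_i)v_i((x-\zeta_i)/\eps)$. Because $\zeta_i\in\F(G)$, the $\phi_i$-equivariance of $v_i$ is preserved and $v_{i,\eps}\in H_0^1(\o)^{\phi_i}$. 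The diagonal quantities again converge to $\|v_i\|^2$ and to $\int\mu_i|v_i|^p$, while the cross terms vanish: after rescaling, the factor coming from $v_j$ becomes $v_j(y+(\zeta_i-\zeta_j)/\eps)$, translated by a vector tending to infinity, and exponential decay of the solutions $v_j$ drives the corresponding integrals to zero. Lemma \ref{N}$(ii)$ then yields $\bf t_\eps\to(1,\ldots,1)$ and $c_\eps^\ell(\o)\leq\cJ_\eps^\ell(\bf t_\eps\bf v_\eps)\to\sum_{i=1}^\ell J_i(v_i)=\sum_{i=1}^\ell c_i=c_\infty^\ell$.

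The main subtlety is the symmetry bookkeeping: the cutoff has to be $G$-invariant (which is why a radial one is used), and in the second case the translation centers $\zeta_i$ must actually lie in $\F(G)$, or else the translated and rescaled functions would leave the space $H_0^1(\o)^{\phi_i}$ and the construction would fail. Once these constraints are observed, the remaining work is a standard combination of cutoff-and-rescale estimates with the Nehari projection from Lemma \ref{N}$(ii)$, plus the routine fact that translates of fixed decaying functions by vectors tending to infinity kill the mixed coupling integrals.
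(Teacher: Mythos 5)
Your lower bound coincides with the paper's: rescale by $\eps$, invoke Remark \ref{rem:norms} to see that the rescaled tuple lies in $\cN_\infty^\ell$ with the same energy, and conclude $c_\eps^\ell(\o)\geq c_\infty^\ell$ for every $\eps>0$. Your upper bound uses the same mechanics as the paper (radial cutoff, rescaling, Nehari projection via Lemma \ref{N}$(ii)$ with $\mathbf{t}_\eps\to(1,\ldots,1)$) but a genuinely different choice of test function, and this is where the two arguments diverge. The paper tests with an \emph{arbitrary} $\mathbf{w}\in\cN_\infty^\ell$, obtains $\limsup_{\eps\to 0}c_\eps^\ell(\o)\leq\cJ_\infty^\ell(\mathbf{w})$, and only at the end takes the infimum over $\mathbf{w}$; no attainment of any infimum is needed and no case distinction on $\F(G)$ is made. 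You instead split according to whether $\F(G)=\{0\}$ and plug in actual minimizers: a least energy solution of \eqref{Sl_pi} in the first case (legitimately available from Theorem \ref{thm:existence_rn}, proved before this lemma), and translated least energy solutions of the decoupled problems \eqref{P_pi} in the second. Both routes work, but yours is heavier: in the second case you essentially re-run the proof of Proposition \ref{prop:energy_estimates}$(ii)$ inside the bounded domain, and you quietly rely on each $c_i$ being attained in $H^1(\rn)^{\phi_i}$ --- not obvious a priori, since the embedding $H^1(\rn)^{\phi_i}\hookrightarrow L^p(\rn)$ fails to be compact precisely when $\F(G)$ has positive dimension. That attainment does follow from Theorem \ref{thm:spliting}$(II)$, which is already available, but you should cite it; alternatively it can be avoided entirely by testing with $v_i\in\cN_i$ satisfying $J_i(v_i)<c_i+\delta$ and letting $\delta\to 0$. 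The paper's single uniform argument buys exactly this economy. The rest of your bookkeeping --- $G$-invariance of the radial cutoff, the requirement $\zeta_i\in\F(G)$ for equivariance of the translates, and the vanishing of the cross terms under divergent translations (for which $L^p$-integrability alone suffices; exponential decay is not needed) --- is sound.
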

	
	\begin{proof} 
		Let $\bf u=(u_1,\ldots,u_\ell)\in \cN^\ell_\eps(\o)$ and define $\tilde u_i(z):=u_i(\eps z)$. By Remark \ref{rem:norms}, $\tilde{\bf u}=(\tilde u_1,\ldots,\tilde u_\ell)\in \cN^\ell_\infty$ and
		$\cJ_\eps^\ell(\bf u)=\cJ_\infty^\ell(\tilde{\bf u})\geq c_\infty^\ell$. Therefore, 
		\begin{equation}\label{e0P3.10}
			c_\infty^\ell\leq \displaystyle\liminf_{\eps\to 0}c_\eps^\ell.
		\end{equation} 
		To prove the opposite inequality, let $\bf w=(w_1,\ldots,w_\ell) \in \cN^\ell_\infty$. Fix $r>0$ such that $B_r(0)\subset\o$ and a radial cut-off function $\chi \in C^\infty_c(\mathbb{R}^N)$ satisfying $\chi(x) = 1$ for $|x| \leq \frac{r}{2}$ and $\chi(x) = 0$ for $|x|>r$. For $\eps>0$ define $\bf w_\eps=(w_{1\eps},\ldots,w_{\ell\eps})$ by
		$$w_{i\eps}(x) :=  w_i\Big(\frac{x}{\eps}\Big)\chi(x)\qquad i =1,\ldots,\ell.$$
		Since $w_i\in H^1(\rn)^{\phi_i}$ and $\chi$ is radial, we have that $w_{i\eps}\in H^1(\rn)^{\phi_i}$. As $\bf w\in \cN^\ell_\infty$, from \eqref{eq:nehari} and Lemma \ref{N}$(i)$ we get
		$$0<c_0\leq \|w_{i}\|^2 = {\mu_i}|w_{i}|_p^{p} + \displaystyle\sum_{\substack{j=1\\j\not=i}}^\ell\int_{\mathbb{R}^N}{\lambda_{ij}}\beta_{ij}|w_{j}|^{\alpha_{ij}}|w_{i}|^{\beta_{ij}} \qquad \text{for all \ }i =1,\ldots,\ell.$$
		Note that $w_{i\eps} \to w_i$ in $H^1(\rn)$ as $\eps \to 0$. Therefore, for sufficiently small $\eps>0$, Lemma \ref{N}$(ii)$ yields $\bf t_\eps = (t_{1\eps},\ldots,t_{\ell\eps})\in (0,\infty)^\ell$ such that $\bf t_\eps \bf w_\eps \in \cN^\ell_\eps(\o)$, and $t_{i\eps}\to 1$ as $\eps\to 0$. Thus,
		$$c_\eps^\ell(\o) \leq  \cJ^\ell_\eps(\bf t_\eps \bf w_\eps) = \dfrac{p-2}{2p}\|\bf t_\eps \bf w_\eps\|_{\ell,\eps}^2 \to \dfrac{p-2}{2p}\| \bf w\|_{\ell}^2 =\cJ^\ell_\infty(\bf w) \quad \text{as} \quad \eps \to 0,$$
		and, as a consequence,
		\begin{equation}\label{e3P3.10}
			\limsup_{\eps \to 0}c_\eps^\ell(\o) \leq c_\infty^\ell.
		\end{equation}
		Inequalities \eqref{e0P3.10} and \eqref{e3P3.10} yield the desired identity.
	\end{proof}
	
	\begin{proof}[Proof of Theorem \ref{thm:profiles}]
		Applying Theorem \ref{thm:minimizers} we obtain a least energy solution $\bf u_k=(u_{1k},\ldots,u_{\ell k})$ to the system $(\mathscr S^\phi_{\eps_k,\o})$ for every $k\in\n$. Define $\tilde{\bf u}_k=(\tilde u_{1k},\ldots,\tilde u_{\ell k})$ by taking $\tilde u_{ik}(z):=u_{ik}(\eps_k z)$. From Remark \ref{rem:norms} and Lemma \ref{lem:limit_energy} we see that $\tilde{\bf u}_k\in\cN_\infty^\ell$ and $\cJ_\infty^\ell(\tilde{\bf u}_k)\to c_\infty^\ell$. Now we distinguish two cases:
		
		$(I)$ If $\F(G)=\{0\}$, applying Theorem \ref{thm:spliting} to  $\tilde{\bf u}_k$ we obtain a least energy solution $\bf w=(w_1,\ldots,w_\ell)$ to the system \eqref{Sl_pi} such that $\|u_{ik}-w_{i}(\eps_k^{-1} \ \cdot \ )\|_{\eps_k}=\|\tilde u_{ik}-w_{i}\|\to 0$. The identity $\displaystyle\lim_{\eps \to 0}c_\eps^\ell(\o) = c_\infty^\ell$ was proved in Lemma \ref{lem:limit_energy}.
		
		$(II)$ If $\F(G)$ has positive dimension, Theorem \ref{thm:spliting} applied to  $\tilde{\bf u}_k$ yields a sequence $(\xi_{ik})$ in $\F(G)$ and a least energy solution $v_i$ to the problem \eqref{P_pi} for each $i=1,\ldots,\ell$, such that, setting $\zeta_{ik}:=\eps_k\xi_{ik}$, we get
		\[\lim_{k \to \infty}\eps_k^{-1}|\zeta_{ik}-\zeta_{jk}|=\infty\text{ \ if \ }i\neq j,\qquad\text{and}\qquad \|u_{ik}-v_{i}(\eps_k^{-1}( \ \cdot \ -\zeta_{ik}))\|_{\eps_k}=\|\tilde u_{ik}-v_{i}( \ \cdot \ -\xi_{ik})\|\to 0,\]
		for $i,j=1,\ldots,\ell$. We are left with showing that $\zeta_{ik}\in\o$ and
		\[\lim_{k \to \infty}\eps_k^{-1} \mathrm{dist}(\zeta_{ik},\partial\o) = \infty\qquad\text{for all \ }i=1,\ldots,\ell.\]
		To this end, we recall that the points $\xi_{ik}$ satisfy \eqref{eq:nonull2}, i.e., there exist $\theta,R>0$ such that
		\begin{equation} \label{eq:nonull3}
			\int_{B_R(\xi_{ik})}\mu_i|\tilde u_{ik}|^p > \theta>0\qquad\text{for all \ }k\in\n \text{ \ and \ } i=1,\ldots,\ell.
		\end{equation}
		Note that $\supp(\tilde u_{ik})\subset\o_{\eps_k}$, where $\o_{\eps_k}:=\{z\in\rn:\eps_kz\in\o\}$. So \eqref{eq:nonull3} implies that $\mathrm{dist}(\xi_{ik},\o_{\eps_k})<R$ for every $k\in\n$ and every $i$. A standard argument shows that $\mathrm{dist}(\xi_{ik},\partial\o_{\eps_k})\to\infty$, otherwise $v_i$ would be zero in some open subset of $\rn$, contradicting the unique continuation principle. In particular, we have that $\xi_{ik}\in\o_{\eps_k}$ for all $k\in\n$. Rescaling yields $\zeta_{ik}\in\o$ and 
		$\displaystyle\lim_{k \to \infty}\eps_k^{-1} \mathrm{dist}(\zeta_{ik},\partial\o) = \infty$ for all $i=1,\ldots,\ell$, as claimed. This completes the proof.
	\end{proof}
	
	Next, we derive Theorems \ref{thm:main1} and \ref{thm:main2} from Theorem \ref{thm:profiles}.
	
	\begin{proof}[Proof of Theorem \ref{thm:main1}.] The group $G:=\Gamma\times O(N-4)$ in Example \ref{examples}$(iii)$ satisfies $(A_2)$ if $N=4$ or $N\geq 6$, and $\F(G)=\{0\}$. For $i=1,\ldots,m$ we take $\phi_i\equiv 1$, and for $i=m+1,\ldots,\ell$ we take $\phi_i:=\phi$ as in Example \ref{examples}$(iii)$. By Theorem \ref{thm:profiles} there exist a least energy solution $\widehat{\bf u}_k=(\widehat u_{1k},\ldots,\widehat u_{\ell k})$ to the system $(\mathscr S^\phi_{\eps_k,B_1(0)})$ and a least energy solution $\bf w=(w_1,\ldots,w_\ell)$ to the system \eqref{Sl_i} satisfying \eqref{eq:symmetries} such that, after passing to a subsequence,
		\[\lim_{k \to \infty}\|\widehat u_{ik}-w_{i}(\eps_k^{-1} \ \cdot \ )\|_{\eps_k}=0\qquad\text{for all \ }i=1,\ldots,\ell,\]
		and \ $\displaystyle\lim_{k\to\infty}\displaystyle\sum_{i=1}^\ell\|\widehat u_{ik}\|_{\eps_k}^2=\lim_{\eps \to 0}c_\eps^\ell(B_1(0))=c_\infty^\ell=\displaystyle\sum_{i=1}^\ell\|w_i\|^2$. By Remark \ref{rem:positive}, we may take $\widehat u_{1k},\ldots,\widehat u_{m k}$ to be positive. Then, $w_1,\ldots,w_m$ are also positive. The symmetries ensure that the last $\ell-m$ components of $\widehat{\bf u}_k$ and $\bf w$ are nonradial and change sign.
	\end{proof}
	\smallskip
	
	\begin{proof}[Proof of Theorem \ref{thm:main2}.] The group $G:=\Gamma$ in Example \ref{examples}$(ii)$ satisfies $(A_2)$, and $\F(G)=\{0\}\times\r^{N-4}$ has positive dimension if $N\geq 5$. For $i=1,\ldots,m$ we take $\phi_i\equiv 1$, and for $i=m+1,\ldots,\ell$ we take $\phi_i:=\phi$ as in Example \ref{examples}$(ii)$. By Theorem \ref{thm:profiles}, there exist a least energy solution $\bf u_k=(u_{1k},\ldots,u_{\ell k})$ to the system $(\mathscr S^\phi_{\eps_k,B_1(0)})$ and, for each $i=1,\ldots,\ell$, a sequence $(\zeta_{ik})$ in $B_1(0)\cap\F(G)$ and a least energy solution $w_i$ to the problem \eqref{P_pi} such that, after passing to a subsequence,
		\[\lim_{k \to \infty}\eps_k^{-1} \mathrm{dist}(\zeta_{ik},\partial B_1(0)) = \infty,\quad\lim_{k \to \infty}\eps_k^{-1}|\zeta_{ik}-\zeta_{jk}|=\infty\text{ if }i\neq j,\qquad\lim_{k\to\infty}\|u_{ik}-w_{i}(\eps_k^{-1}( \ \cdot \ -\zeta_{ik}))\|_{\eps_k}=0,\]
		for $i,j=1,\ldots,\ell$, \ and \ $\displaystyle\lim_{k\to\infty}\displaystyle\sum_{i=1}^\ell\|u_{ik}\|_{\eps_k}^2=\lim_{\eps \to 0}c_\eps^\ell(B_1(0)) =\displaystyle\sum_{i=1}^\ell c_i=\displaystyle\sum_{i=1}^\ell\|w_i\|^2$. 
		
		By Remark \ref{rem:positive}, we may take $u_{1k},\ldots,u_{m k}$ to be positive. Then, $w_1,\ldots,w_m$ are also positive. It is well known that problem \eqref{P_i} has a unique positive radial solution $v_i$ and that any other positive solution is a translation of it \cite{k}. Hence, there exists $\vartheta_i\in\rn$ such that $v_i(y)=w_i(y+\vartheta_i)$ for every $y\in\rn$ and $i=1,\ldots,m$.
		
		Now take $i\in\{m+1,\ldots,\ell\}$. Then, $w_i\in H^1(\rn)^\phi$ with $\phi$ as in Example \ref{examples}$(ii)$, i.e., $w_i$ satisfies
		\begin{equation} \label{eq:symmetries3}
			w_i(z_1,z_2,x)=w_i(\mathrm{e}^{\mathrm{i}\vartheta} z_1,\mathrm{e}^{\mathrm{i}\vartheta} z_2,x)\text{ \ for all \ }\vartheta\in[0,2\pi),\qquad w_i(z_1,z_2,x)=-w_i(z_2,z_1,x),
		\end{equation}
		for all $(z_1,z_2,x)\in\cc\times\cc\times\r^{N-4}$. We claim that there exists $\vartheta_i\in\F(G)$ such that $v_i(y):=w_i(y+\vartheta_i)$ satisfies \eqref{eq:symmetries2}. To prove this claim, observe that, if $\pi_j:\rn\to\rn$ is the reflection on the hyperplane $\{(z,x_1,\ldots,x_{N-4})\in\cc^2\times\r^{N-4}:x_j=a\}$, $a\in\r$, then the function
		\begin{equation*}
			\bar w(z,x):=
			\begin{cases}
				w_i(z,x) & \text{if \ }x_j>a,\\
				(w_i\circ\pi_j)(z,x) & \text{if \ }x_j<a,
			\end{cases}
		\end{equation*}
		satisfies \eqref{eq:symmetries3}. With this remark and following Lopes' argument \cite{l} (see also \cite[Appendix C]{W}) one shows that, for each $j=1,\ldots,N-4$, there exists $\theta_{ij}\in\r$ such that $w_i$ is invariant with respect to the reflection on the hyperplane $x_j=\theta_{ij}$. Setting $\theta_i=(\theta_{i1},\ldots,\theta_{1\,N-4})$ we infer that the function $v_i(z,x):=w_i(z,x+\theta_i)$ satisfies $v_i(z,x)=v_i(z,gx)$ for all $g\in O(N-4)$ and $(z,x)\in\cc^2\times\r^{N-4}$. Hence, setting $\vartheta_i=(0,\theta_i)\in\cc^2\times\r^{N-4}$, we have that $v_i(y)=w_i(y+\vartheta_i)$ satisfies \eqref{eq:symmetries2}, as claimed.
		
		Let $\xi_{ik}:=\zeta_{ik}+\eps_k\vartheta_i$, where $\vartheta_i$ are the points defined in the previous paragraphs, so that $v_i(y):=w_i(y+\vartheta_i)$ is radial if $i=1,\ldots,m$ and it satisfies \eqref{eq:symmetries2} if $i=m+1,\ldots,\ell$. Note that, as $\eps_k\to 0$, we have that $\xi_{ik}\in B_1(0)$ for $k$ large enough. From the corresponding statements for $\zeta_{ik}$ and $w_i$ one immediately derives
		\[\lim_{k \to \infty}\eps_k^{-1} \mathrm{dist}(\xi_{ik},\partial B_1(0)) = \infty,\quad\lim_{k \to \infty}\eps_k^{-1}|\xi_{ik}-\xi_{jk}|=\infty\text{ if }i\neq j,\qquad\lim_{k\to\infty}\|u_{ik}-v_{i}(\eps_k^{-1}( \ \cdot \ -\xi_{ik}))\|_{\eps_k}=0,\]
		for $i,j=1,\ldots,\ell$, \ and \ $\displaystyle\lim_{k\to\infty}\displaystyle\sum_{i=1}^\ell\|u_{ik}\|_{\eps_k}^2=\lim_{\eps \to 0}c_\eps^\ell(B_1(0)) =c_\infty^\ell=\displaystyle\sum_{i=1}^\ell c_i=\displaystyle\sum_{i=1}^\ell\|v_i\|^2$. 
		
		The inequality $\mathfrak{c}_m<\widehat{\mathfrak{c}}_m$ follows from Proposition \ref{prop:energy_estimates}.
	\end{proof}
	
	Finally, regarding positive solutions in the the unit ball $B_1(0)$ in $\rn$, we derive the following result.
	
	\begin{theorem} \label{thm:radial}
		Let $N\geq 2$, $\o=B_1(0)$ and  $(\eps_k)$ be a sequence of positive numbers converging to zero. Then there exist two solutions $\widehat{\bf u}_k=(\widehat u_{1k},\ldots,\widehat u_{\ell k})$ and $\bf u_k=(u_{1k},\ldots,u_{\ell k})$ to the system $(\mathscr S_{\eps_k,\o})$ whose components $\widehat u_{ik}$ and $u_{ik}$ are positive and satisfy:
		\begin{itemize}
			\item  $\widehat u_{ik}$ is radial and, after passing to a subsequence,
			\[\lim_{k \to \infty}\|\widehat u_{ik}-\widehat{\omega}_{i}(\eps_k^{-1} \ \cdot \ )\|_{\eps_k}=0\qquad\text{for all \ }i=1,\ldots,\ell,\]
			where $\widehat{\boldsymbol{\omega}}_k=(\widehat \omega_{1k},\ldots,\widehat\omega_{\ell k})$ is a least energy positive radial solution to the system \eqref{Sl_i} in $\rn$.
			\item  For each $i=1,\ldots,\ell$ there is a sequence $(\xi_{ik})$ in $B_1(0)$ such that, after passing to a subsequence,
			\[\lim_{k \to \infty}\eps_k^{-1} \mathrm{dist}(\zeta_{ik},\partial\o) = \infty,\quad\lim_{k \to \infty}\eps_k^{-1}|\zeta_{ik}-\zeta_{jk}|=\infty\text{ if }i\neq j,\qquad\lim_{k\to\infty}\|u_{ik}-v_{i}(\eps_k^{-1}( \ \cdot \ -\zeta_{ik}))\|_{\eps_k}=0,\]
			for $i,j=1,\ldots,\ell$, where $v_i$ is the positive radial solution to the problem \eqref{P_i}.
		\end{itemize}
	\end{theorem}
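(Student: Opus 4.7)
The plan is to derive Theorem \ref{thm:radial} from Theorem \ref{thm:profiles} by making two different choices of the symmetry group $G$ and of the homomorphisms $\phi_i$, producing one solution per choice. In both cases I would take $\phi_i\equiv 1$ for every $i=1,\ldots,\ell$, so that the $\phi_i$-equivariant Sobolev spaces reduce to $G$-invariant Sobolev spaces, the hypothesis $(A_1)$ holds trivially, and Remark \ref{rem:positive} allows me to replace each component of the resulting least energy solution by its absolute value, yielding nonnegative components; the strong maximum principle then upgrades this to strict positivity.

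For the first solution $\widehat{\bf u}_k$, I would take $G:=O(N)$. Since $N\geq 2$, the orbit of any $x\neq 0$ is the sphere of radius $|x|$ (hence infinite), while $G\cdot 0=\{0\}$, so $G$ satisfies $(A_2)$ and $\F(G)=\{0\}$. The spaces $H^1_0(B_1(0))^{\phi_i}$ and $H^1(\rn)^{\phi_i}$ then coincide with the radial Sobolev spaces. Applying Theorem \ref{thm:profiles}$(I)$ to the least energy solution provided by Theorem \ref{thm:minimizers} produces $\widehat{\bf u}_k$ with radial positive components and a least energy solution $\widehat{\boldsymbol{\omega}}$ to $(\mathscr S^\phi_{\infty,\ell})$ satisfying $\|\widehat u_{ik}-\widehat \omega_i(\eps_k^{-1}\ \cdot\ )\|_{\eps_k}\to 0$. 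Since the $\phi_i\equiv 1$ choice makes $\widehat{\boldsymbol{\omega}}$ precisely a least energy positive radial solution to \eqref{Sl_i}, the first half of the theorem follows.

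For the second solution $\bf u_k$, I would take $G:=\{1\}$ trivial. Then $(A_2)$ holds vacuously (every orbit is a singleton), $\F(G)=\rn$ has positive dimension, and the equivariance constraints disappear, so $(\mathscr S^\phi_{\eps_k,B_1(0)})$ reduces to $(\mathscr S_{\eps_k,B_1(0)})$. Theorem \ref{thm:minimizers} produces a least energy solution $\bf u_k$ with positive components (by Remark \ref{rem:positive}), and Theorem \ref{thm:profiles}$(II)$ yields sequences $(\zeta_{ik})\subset B_1(0)\cap\F(G)=B_1(0)$ with the required mutual separation and boundary-escape properties together with nontrivial nonnegative least energy solutions $v_i$ to \eqref{P_i}. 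The maximum principle gives $v_i>0$, Gidas--Ni--Nirenberg gives radial symmetry of $v_i$ about some point $\vartheta_i\in\rn$, and Kwong's uniqueness theorem \cite{k} identifies $v_i(\ \cdot\ +\vartheta_i)$ with the unique positive radial ground state of \eqref{P_i}. Mimicking the final paragraph of the proof of Theorem \ref{thm:main2}, I would absorb $\vartheta_i$ into the centers by replacing $\zeta_{ik}$ by $\zeta_{ik}+\eps_k\vartheta_i$ and $v_i$ by its radial translate; since $\eps_k\to 0$ and $\vartheta_i$ is fixed, the distance-to-boundary, the pairwise separation, and the convergence $\|u_{ik}-v_i(\eps_k^{-1}(\ \cdot\ -\zeta_{ik}))\|_{\eps_k}\to 0$ are all preserved.

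I do not foresee any serious obstacle: the statement is essentially a bookkeeping corollary of Theorem \ref{thm:profiles} once one observes that the trivial group is a legitimate choice (it makes $(A_2)$ vacuous and enlarges $\F(G)$ to all of $\rn$) and that $O(N)$ works for the radial case as soon as $N\geq 2$. The only mildly delicate point is the translation-absorption argument needed to make the limit profile $v_i$ exactly the positive radial ground state rather than an arbitrary translate of it, but that bookkeeping is already carried out verbatim in the proof of Theorem \ref{thm:main2}.
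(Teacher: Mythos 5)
Your proposal is correct and is essentially identical to the paper's own (very short) proof: the authors obtain $\widehat{\bf u}_k$ by applying Theorem \ref{thm:profiles} with $G=O(N)$ and $\phi_i\equiv 1$, and $\bf u_k$ by applying it with the trivial group $G=\{1\}$ and $\phi_i\equiv 1$. The extra details you supply (verification of $(A_2)$ and of $\F(G)$, Remark \ref{rem:positive} for positivity, and the translation-absorption via Kwong's uniqueness as in the proof of Theorem \ref{thm:main2}) are exactly the bookkeeping the paper leaves implicit.
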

	
	\begin{proof}
		The solutions $\widehat{\bf u}_k$ are obtained applying Theorem \ref{thm:profiles} with $G=O(N)$ and $\phi_i\equiv 1$ for all $i=1,\ldots,\ell$. The solutions $\bf u_k=(u_{1k},\ldots,u_{\ell k})$ are obtained applying Theorem \ref{thm:profiles} to the trivial group $G=\{1\}$ and $\phi_i\equiv 1$ for all $i=1,\ldots,\ell$. 
	\end{proof} 

The solutions $\bf u_k$ were obtained and fully described in \cite{LW}. In addition, as mentioned in the introduction, the existence of a least energy radial solution to the system \eqref{Sl_i} was shown by Sirakov in \cite{s}. Theorem \ref{thm:radial} gives an alternative proof of this fact. Finally, we point out that the existence of multiple positive solutions, both in $\rn$ and in bounded domains, has been established, for instance, in \cite{bdw,dww,nr,ww1,ww2}. Positive solutions of singularly perturbed systems with potentials have been obtained, e.g., in \cite{MPS,p}.

	\medskip
	
	\begin{flushleft}
		\textbf{Mónica Clapp} and \textbf{Mayra Soares}\\
		Instituto de Matemáticas\\
		Universidad Nacional Autónoma de México\\
		Circuito Exterior, Ciudad Universitaria\\
		04510 Coyoacán, Ciudad de México, México\\
		\texttt{monica.clapp@im.unam.mx \\ ssc\_mayra@im.unam.mx} 
	\end{flushleft}	
	
\end{document}